\newtheorem{Thm}{Theorem}[section]
\newtheorem{Lem}[Thm]{Lemma}
\theoremstyle{remark}
\newtheorem{Example}[Thm]{Example}
\newtheorem{Rem}[Thm]{Remark}
\numberwithin{equation}{section}
\newcommand{\T}{\mathbb{T}}
\newcommand{\C}{\mathbb{C}}
\newcommand{\R}{\mathbb{R}}
\newcommand{\D}{\mathbb{D}}
\newcommand{\HH}{\mathcal{H}}
\newcommand{\DD}{\mathcal{D}}
\begin{document}

\title[Carleson Measures and the Reproducing Kernel Thesis]
 {Carleson Measures and Reproducing Kernel Thesis in Dirichlet-type spaces}

\author[G.R. Chac\'{o}n]{Gerardo R. Chac\'{o}n}
\author[E. Fricain]{Emmanuel Fricain}
\author[M. Shabankhah]{Mahmood Shabankhah}
\address{Gerardo R. Chac\'{o}n, Departamento de Matematicas, Pontificia Universidad Javeriana, Bogot\'{a}, Colombia}
\email{chacong@javeriana.edu.co}
\address{Emmanuel Fricain, Universit\'e de Lyon; Universit\'e Lyon 1; Institut Camille Jordan CNRS UMR 5208;
43, boulevard du 11 Novembre 1918, F-69622 Villeurbanne}
\email{fricain@math.univ-lyon1.fr}
\address{Mahmood Shabankhah, Laboratoire Paul Painlev\'e, UMR 8524,
Universit\'e Lille 1, 59655 Villeneuve d'Ascq Cedex,
France.}
\address{{\it Current address: }Department of Mathematics and Statistics\\ 
McGill University\\ Montreal, QC\\Canada H3A 2K6}
\email{mshaban@math.mcgill.ca}
\email{shabankh@math.univ-lille1.fr}
\thanks{The second author is partially supported by the ANR FRAB.
The third author's research is supported by ANR DYNOP and FQRNT}

\subjclass[2010]{Primary: 46E22; Secondary: 30H10, 31C25}



\keywords{Dirichlet-type spaces, Carleson measures, reproducing kernel thesis}

\begin{abstract}
In this paper, using a  generalization of a Richter and Sundberg
representation theorem, we give a new characterization of Carleson measures for the Dirichlet-type space $\DD(\mu)$ when $\mu$ is a finite sum of point masses.
A reproducing kernel thesis result is also
established in this case.
\end{abstract}

\maketitle

\section{Introduction}
Dirichlet-type spaces (also called local Dirichlet spaces) have been introduced by
S. Richter \cite{Rich91} when investigating analytic two-isometries. This class
of operators appeared for the first time in \cite{agler} in connection with
the compression of a first-order differential operator to the Hardy space $H^2$
of the unit disc. The study of two-isometries and related operators is also of interest
for its relations with the theory of dilations and
invariant subspaces of the shift operator on the classical Dirichlet space $\DD$ \cite{Richter-JReine}.
It is an immediate consequence of
the norm definitions that $M_z$, the operator of multiplication by the independant
variable $z$, is an isometry on $H^2$ but not on $\DD$ (see Section 2 for precise definitions). In fact, one can
verify that $M_z$ is an analytic two isometry on $\DD$. It is a remarkable result of
S. Richter \cite{Rich91} that every analytic two-isometry satisfying
$\textrm{dim Ker}(T^*)=1$ is unitarily equivalent to $M_z$ on some
Dirichlet-type space $\DD(\mu)$. These spaces have been studied ever since by several authors, see for example
\cite{A93}, \cite{GR1}, \cite{GR2}, \cite{Cha1}, \cite{Chartrand}, \cite{RS92},
\cite{Sar97}, \cite{Sa2}, \cite{Ser}, \cite{Shi01} and \cite{Shi}.

In particular, in \cite{GR1}, the first author introduces a notion of capacity adapted
to Dirichlet-type spaces and he gives a characterization of Carleson measures for
$\DD(\mu)$ in terms of this capacity. Carleson measures for the Hardy space
have proved to be objects of fundamental importance in the development of modern function
theory. In particular, they have appeared in areas ranging from the celebrated Corona problem and its solution by Carleson \cite{Car62}, to the
development of bounded mean oscillation (BMO) functions by C. Fefferman and E. Stein \cite{FefStein72},
P. Jones \cite{Jones80} and many others. The characterization obtained in \cite{GR1} is
similar to those given by D.~Stegenga in \cite{stegenga} for the classical Dirichlet space.
In the present paper, we will provide a new characterization of the Carleson measures for
the space $\DD(\mu)$ when $\mu$ is a finite sum of point masses. As we will see,
in this case, Carleson measures for $\DD(\mu)$ are determined, in a very specific way,
from those of $H^2$. The key idea is a generalization of Richter-Sundberg
representation theorem.

The other natural question we address is the reproducing kernel thesis for the embedding
$\DD(\mu)\hookrightarrow L^2(\nu)$, where $\nu$ is a positive Borel measure on the unit
disc. Recall that an operator on a reproducing kernel Hilbert space is said to satisfy the
Reproducing Kernel Thesis (RKT) if its boundedness is determined by its behaviour on the
reproducing kernels. In general, there is no reason why this should be true
but it turns out, as was proved by L. Carleson, that this is indeed the case
for the identity map $I: H^2 \to L^2(\nu)$. More explicitly, $I$ is bounded
(compact, respectively) on $H^2$ if and only if it acts as a
bounded (compact, respectively) operator on the set $\{k_z, z\in \D\}$.
Though there were many results of this type since Carleson's result, philosophically
the idea to study (RKT) for classes of operators in general reproducing kernel Hilbert
spaces comes from \cite{Havin-Nikolski} (see also \cite{Nikolski-controle}). We will show
that the identity map $I:\DD(\mu)\to L^2(\nu)$ is another example of operators
satisfying the (RKT), in the case where $\mu$ is a finite sum of point masses. Let us mention that there is another natural generalization of the classical
Dirichlet space $\DD$, the so-called weighted Dirichlet spaces, where the (RKT)
for the embedding in $L^2(\nu)$ space is not valid (see Remark~\ref{Rem:weighted-dirichlet-spaces}
for further details). 

The plan of the paper is the following. The next section contains preliminary material
concerning Dirichlet-type spaces and Carleson measures. Section 3 contains a
representation theorem for functions in $\DD(\mu)$ spaces corresponding to the case where
$\mu$ is a finite sum of point masses. In Section 4, we give a new characterization of
Carleson measures in Dirichlet-type spaces induced by finetely atomic measures. In sections 5 and 6  a reproducing kernel thesis for the embedding
$\DD(\mu)\hookrightarrow L^2(\nu)$ is established. Finally, in section 7, compact Carleson measures for Dirichlet-type spaces induced by finitely atomic measures are characterized in terms of the normalized reproducing kernels of the space.

\section{Preliminaries}
Recall that the Hardy space $H^2$ of the unit disc $\D=\{z\in\C:|z|<1\}$ is the Hilbert
space of functions $f(z)=\sum_{n\geq 0}a_n z^n$ analytic on $\D$ and such that
\[
\|f\|_2^2:=\sum_{n\geq 0}|a_n|^2<+\infty.
\]
If $f$ is an analytic function on $\D$, then we denote by $\DD(f)$ its Dirichlet integral
given by
\[
\DD(f) := \int_\D |f'(z)|^2 \, dA(z),
\]
where $dA$ is the normalized area measure.
Then the classical Dirichlet space $\DD$ consists of analytic functions
whose Dirichlet integral is finite. It is easily verified that $\DD$ is a
subspace of $H^2$. In particular, $\DD$ turns into a Hilbert space under the following natural norm
\[
\|f\|_\DD^2 := \|f\|_2^2 + \DD(f).
\]

Now let $\mu$ be a positive finite Borel measure on the unit circle $\T$, and let $P_\mu$
be its harmonic extension to $\D$, i.e.,
\[
P_\mu(z) = \int_\T \frac{1-|z|^2}{|\zeta-z|^2}\, d\mu(\zeta), \hspace{1cm} (z\in \D).
\]
The corresponding $\DD(\mu)$ space is defined to be the set of holomorphic functions
$f$  for which
\[
\DD_\mu(f) := \int_\D |f'(z)|^2 \,P_\mu(z)\, dA(z) < \infty.
\]
In particular, if $\mu$ is taken to be the normalized Lebesgue measure on $\T$, then  $P_\mu(z)=1$, $z\in\D$, and
therefore $\DD(\mu)=\DD$, the classical
Dirichlet space.
The quantity $\DD_\mu(f)$ is called the Dirichlet integral of $f$ with respect to $\mu$.
If $\mu=\delta_\lambda$, i.e., the unit point mass at $\lambda\in \T$, one writes $\DD_\lambda(f)$
instead of $\DD_\mu(f)$, and calls it the local Dirichlet integral of $f$ at $\lambda$.
By Fubini's Theorem, we have
\begin{equation}\label{lint}
\DD_\mu(f) = \int_\T \DD_\lambda(f) \, d\mu(\lambda).
\end{equation}

In \cite{RichSun91}, S. Richter and C. Sundberg proved that $f \in \DD(\delta_\lambda)$ if and only if $f=c+(z-\lambda)g$,
where $c$ is a constant and $g\in H^2$. In this case, $c$ is the non-tangential (even oricyclic) limit
of $f$ at $\lambda$, denoted by $f(\lambda)$, and $\|g\|_2^2 = \DD_\lambda(f)$. In other words,
\begin{equation}\label{eq:local-direct-integral}
\DD_\lambda(f) = \Bigl\| \frac{f-f(\lambda)}{z-\lambda}\Bigr\|_2^2.
\end{equation}
A different proof of these facts can be found in \cite{Sar97}.
Since $\DD(\delta_\lambda) \subset H^2$, it follows from (\ref{lint}) that $\DD(\mu) \subset H^2$.
The norm, with respect to which $\DD(\mu)$ is a Hilbert space, is given by
\begin{eqnarray}\label{eq:definition-norme-dirichlet-local}
\|f\|_\mu^2 := \|f\|_2^2+\DD_\mu(f).
\end{eqnarray}
Moreover, polynomials form a dense subset of $\DD(\mu)$ \cite{Rich91}.

The Hardy space $H^2$ is an example of a reproducing kernel Hilbert space,
i.e., the point evaluation $f \mapsto f(z)$ is a bounded functional on $H^2$,
for every $z \in \D$. In particular,
\[
f(z) = \langle f,k_z \rangle_2,
\]
for some $k_z \in H^2$. One calls $k_z$ the reproducing kernel at $z$.
According to \eqref{eq:definition-norme-dirichlet-local}, the point evaluation
$f \mapsto f(z)$ is also a bounded functional on $\DD(\mu)$, for every $z\in\D$. Now, if $\lambda\in\T$,
using the uniform boundedness principle, we easily see that the functional $f\mapsto f(\lambda)$ is bounded on
$\DD(\delta_\lambda)$ (whereas it is never bounded on $H^2$).

An important notion in the theory of complex function spaces is that of Carleson measures.
Given a Banach space $X$ of holomorphic functions in $\D$,
one says that $\nu$, a positive Borel measure on $\D$, is a {\em Carleson measure for $X$}
if the identity map $I: X \to L^2(\nu)$ is a bounded operator,
i.e., there is a constant $C>0$ such that
\begin{equation}\label{cm}
\int_\D |f|^2 \, d\nu \, \leq C \|f\|_X^2,
\end{equation}
for every function $f$ in $X$.
If the embedding happens to be compact, then $\nu$ is called a compact (or vanishing) Carleson measure.

In the setting of the Hardy space $H^2$, a geometric characterization of the corresponding measures
was obtained by L.~Carleson. He proved that $\nu$ is a Carleson
measure for $H^2$ if and only if
\[
\nu(S(\zeta,h))=O(h), \hspace{1cm} (h \to 0^+),
\]
where, for $\zeta \in \T$ and $0<h<1$, the set $S(\zeta,h)$ is given by
\begin{equation}\label{Cwin}
S(\zeta,h):=\{z\in \D:  1-h<|z|<1 \mbox{~and~} \Bigl|\frac{z}{|z|}-\zeta \Bigr|<\frac{h}{2}\}.
\end{equation}
See for example \cite{Gar07} for a proof of this classical result.
As one might expect, the characterization of compact Carleson measures for $H^2$
is obtained by replacing ``O'' in the above condition by ``o''.

\section{A representation Theorem}

The following result extends Richter-Sundberg representation theorem \cite{RichSun91}
to the case of finitely atomic measures. A part of this result already appeared
(at least implicitly) in \cite{Sa2}.

\begin{Thm}\label{Thm-rep}
Let $\mu = \sum_{j=1}^n \alpha_j \delta_{\lambda_j}$ be a finitely atomic measure, $\alpha_j>0$, $\lambda_j\in\T$,
$1\leq j\leq n$.
Then $f \in \DD(\mu)$ if and only if there exist $g \in H^2$ and a polynomial $p$, $\deg p \leq n-1$, such that
\begin{equation}\label{rep}
f = p + \prod_{j=1}^n (z-\lambda_j) g.
\end{equation}
Moreover, $g$ and $p$ are unique, and we have
\begin{equation}\label{norm-ineq}
\|g\|_2 \leq C \|f\|_\mu,
\end{equation}
for some positive constant $C=C(n,\mu)$.
\end{Thm}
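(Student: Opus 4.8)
My plan is to induct on $n$, using the Richter--Sundberg theorem (the case $n=1$) as the base case and the additivity formula \eqref{lint} to peel off one atom at a time. Write $\mu = \mu' + \alpha_n\delta_{\lambda_n}$ where $\mu' = \sum_{j=1}^{n-1}\alpha_j\delta_{\lambda_j}$. If $f\in\DD(\mu)$, then by \eqref{lint} we have $\DD_{\mu'}(f)\le \alpha^{-1}\DD_\mu(f)<\infty$ and $\DD_{\lambda_n}(f)<\infty$, so $f\in\DD(\mu')\cap\DD(\delta_{\lambda_n})$. By the inductive hypothesis, $f = p' + \prod_{j=1}^{n-1}(z-\lambda_j)\,h$ with $h\in H^2$ and $\deg p'\le n-2$. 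The point is now to show that $h$ itself is divisible (in $H^2$, up to a constant) by $(z-\lambda_n)$; equivalently, that $h\in\DD(\delta_{\lambda_n})$ and $h(\lambda_n)$ makes sense. To see this, observe that $\prod_{j=1}^{n-1}(z-\lambda_j)$ is a polynomial that is bounded and bounded below near $\lambda_n$, and more precisely it lies in $\DD(\delta_{\lambda_n})$ and is a multiplier of $\DD(\delta_{\lambda_n})$ (polynomials are multipliers of every $\DD(\mu)$); so from $f\in\DD(\delta_{\lambda_n})$ and $f - p' = \prod_{j=1}^{n-1}(z-\lambda_j)h$ we deduce $\prod_{j=1}^{n-1}(z-\lambda_j)h\in\DD(\delta_{\lambda_n})$, and then dividing by the nonvanishing-at-$\lambda_n$ polynomial (which is invertible as a multiplier germ at $\lambda_n$) gives $h\in\DD(\delta_{\lambda_n})$. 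Applying Richter--Sundberg to $h$ yields $h = c + (z-\lambda_n)g$ with $g\in H^2$, hence
\[
f = p' + \prod_{j=1}^{n-1}(z-\lambda_j)\bigl(c + (z-\lambda_n)g\bigr) = \underbrace{\Bigl(p' + c\prod_{j=1}^{n-1}(z-\lambda_j)\Bigr)}_{=:p,\ \deg p\le n-1} + \prod_{j=1}^n(z-\lambda_j)\,g,
\]
which is the desired representation \eqref{rep}.

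For the converse, suppose $f = p + \prod_{j=1}^n(z-\lambda_j)g$ with $g\in H^2$ and $\deg p\le n-1$. Then $f$ is holomorphic, and to check $f\in\DD(\mu)$ it suffices, by \eqref{lint}, to verify $\DD_{\lambda_j}(f)<\infty$ for each $j$. Fix $j$; write $\prod_{k\ne j}(z-\lambda_k) = q_j(z)$, a polynomial, so $f = p + (z-\lambda_j)\,q_j g$. Since $p\in\DD(\delta_{\lambda_j})$ trivially and $q_j g\in H^2$, the function $(z-\lambda_j)q_j g$ belongs to $\DD(\delta_{\lambda_j})$ by Richter--Sundberg (it has the required form with constant $0$), hence $f\in\DD(\delta_{\lambda_j})$. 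This gives $f\in\DD(\mu)$.

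It remains to establish uniqueness and the norm estimate \eqref{norm-ineq}. Uniqueness is immediate: if $p_1 + \prod(z-\lambda_j)g_1 = p_2 + \prod(z-\lambda_j)g_2$, then $p_1 - p_2 = \prod(z-\lambda_j)(g_2 - g_1)$; the right-hand side, if $g_2\ne g_1$, vanishes at each $\lambda_j$ and (being in $H^2$) cannot coincide with a polynomial of degree $\le n-1$ unless that polynomial has $n$ roots, forcing $p_1 = p_2$ and then $g_1 = g_2$. For the estimate, the cleanest route is a closed-graph / open-mapping argument: the map $\Phi\colon H^2\times \{p:\deg p\le n-1\}\to\DD(\mu)$, $(g,p)\mapsto p+\prod(z-\lambda_j)g$, is a bounded linear bijection between Hilbert spaces (boundedness follows from the converse direction's estimates: $\|p+\prod(z-\lambda_j)g\|_\mu \lesssim \|p\| + \|g\|_2$ since polynomials and multiplication by $\prod(z-\lambda_j)$ act boundedly), so by the open mapping theorem its inverse is bounded, giving $\|g\|_2 + \|p\|\le C\|f\|_\mu$ with $C = C(n,\mu)$. \textbf{The main obstacle} I anticipate is the divisibility step in the forward direction — rigorously justifying that membership of $\prod_{j=1}^{n-1}(z-\lambda_j)h$ in $\DD(\delta_{\lambda_n})$ forces $h\in\DD(\delta_{\lambda_n})$; this needs the fact that a polynomial not vanishing at $\lambda_n$ is, as a germ, invertible among multipliers of $\DD(\delta_{\lambda_n})$, or alternatively a direct computation with the local Dirichlet integral \eqref{eq:local-direct-integral} comparing $\|(f-f(\lambda_n))/(z-\lambda_n)\|_2$ for $f$ and for $h$. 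One can also sidestep this by carrying all the atoms simultaneously and arguing via the reproducing-kernel / Cauchy-transform description of $\DD(\mu)$, but the inductive peeling is the most transparent.
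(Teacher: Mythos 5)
Your argument is correct in substance and reaches the theorem by a genuinely different organization of the induction, so let me compare. The paper first reduces to the case $f(\lambda_1)=\dots=f(\lambda_n)=0$ by subtracting the Lagrange interpolation polynomial (possible because each evaluation $f\mapsto f(\lambda_j)$ is bounded on $\DD(\mu)$), and then the only divisions it performs are between $H^2$ functions: from $f=(z-\lambda_1)g_1=(z-\lambda_2)g_2$ it checks that $g_1/(z-\lambda_2)\in H^2$ and iterates; uniqueness comes from identifying $p$ with \eqref{pform}, and \eqref{norm-ineq} is obtained constructively, first $\|p\|_\mu\lesssim\|f\|_\mu$ and then $\|g\|_2\lesssim\bigl\|\prod_j(z-\lambda_j)g\bigr\|_\mu$ via elementary inequalities such as $|\lambda_1-\lambda_2|^2\|g\|_2^2\le 2\bigl(\|(z-\lambda_1)g\|_2^2+\|(z-\lambda_2)g\|_2^2\bigr)$. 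You instead peel off one atom at a time, which forces the division to take place at the level of $\DD(\delta_{\lambda_n})$: the lemma you flag --- $h\in H^2$, $q(z)=\prod_{j<n}(z-\lambda_j)$, $qh\in\DD(\delta_{\lambda_n})$ imply $h\in\DD(\delta_{\lambda_n})$ --- is genuinely needed, and ``invertible as a multiplier germ'' is not by itself a proof; but your alternative suggestion does work and is no harder than the paper's own ``simple calculation''. Indeed, write $qh=c+(z-\lambda_n)G$ with $G\in H^2$, put $c'=c/q(\lambda_n)$, and note that $(h-c')/(z-\lambda_n)$ lies in the Smirnov class, so by Smirnov's theorem it is in $H^2$ as soon as its boundary values are square-integrable; on a small arc around $\lambda_n$ where $|q|\ge\delta>0$ one has
\[
\frac{|h-c'|}{|e^{it}-\lambda_n|}\le\frac{1}{\delta}\Bigl(\frac{|qh-c|}{|e^{it}-\lambda_n|}+|c'|\,\frac{|q(e^{it})-q(\lambda_n)|}{|e^{it}-\lambda_n|}\Bigr),
\]
which is square-integrable because $qh\in\DD(\delta_{\lambda_n})$ and $q$ is Lipschitz, while on the complementary arc $|e^{it}-\lambda_n|$ is bounded below and $h\in H^2$ suffices. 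With that lemma written out, your induction, your converse direction (identical to the paper's), your uniqueness argument (a nonzero polynomial of degree at most $n-1$ divided by $\prod_j(z-\lambda_j)$ has a pole on $\T$, hence is not in $H^2$), and your open-mapping proof of \eqref{norm-ineq} (the map $(g,p)\mapsto p+\prod_j(z-\lambda_j)g$ is bounded because $\DD_{\lambda_j}\bigl(\prod_i(z-\lambda_i)g\bigr)=\bigl\|\prod_{i\ne j}(z-\lambda_i)g\bigr\|_2^2\le 4^{n-1}\|g\|_2^2$, and it is bijective by what precedes) are all sound. What each route buys: the paper's avoids any division lemma inside $\DD(\delta_\lambda)$ and produces explicit constants; yours is more modular but nonconstructive in the norm estimate and must pay for the local division lemma at the new atom.
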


\begin{proof}
If $f$ is given by (\ref{rep}), then $f \in \DD(\mu)$. Indeed, we have
\[
\prod_{j=1}^n (z-\lambda_j) g \in \bigcap_{j=1}^n \DD(\delta_{\lambda_j}) = \DD(\mu).
\]

Now let $f \in \DD(\mu)$. We will show that $f$ has a representation of form (\ref{rep}).
The case $n=1$ being already established in \cite{RichSun91}, we consider first the case $n=2$.
Let $f\in \DD(\mu)$. Suppose that $f(\lambda_1) = f(\lambda_2) = 0$.
Since $f\in \DD(\delta_{\lambda_1})\cap\DD(\delta_{\lambda_2})$, we have
\[
f = (z-\lambda_1) g_1 = (z-\lambda_2) g_2,
\]
for some $g_1, g_2 \in H^2$. A simple calculation shows that $g_1/(z-\lambda_2) \in H^2$.
In particular, $f$ can be written as
\[
f = (z-\lambda_1) (z-\lambda_2) g,
\]
where $g \in H^2$.
Using induction, we get that if $f \in \DD(\mu)$, with $f(\lambda_j)=0$, $1 \leq j \leq n$, then
\[
f = (z-\lambda_1) \cdots (z-\lambda_n) g,
\]
for some $g\in H^2$. For the general case, we choose a polynomial $p$, of degree less than or equal to $n-1$,
such that
\[
p(\lambda_j)=f(\lambda_j), \hspace{1cm} (1\leq j \leq n).
\]
Upon applying the preceding argument to $f-p$, we obtain (\ref{rep}).

For the uniqueness in the decomposition \eqref{rep}, we just note that the polynomial $p$ is necessarily given by
\begin{equation}\label{pform}
p(z) = \sum_{j=1}^n  \prod_{1\leq k \leq n,\,k\neq j} \frac{z-\lambda_k}{\lambda_j-\lambda_k} \, f(\lambda_j).
\end{equation}

It remains to prove the norm inequality (\ref{norm-ineq}). Since the evaluation functionals
$f \mapsto f(\lambda_j)$ are bounded, $1 \leq j \leq n$,
we see that
\begin{eqnarray*}
\|p\|_\mu^2 & = & \|p\|_2^2 + \int_\D |p'(z)|^2 \, P_\mu(z) \, dA(z)\\
\\
& \leq & C \|f\|_\mu^2,
\end{eqnarray*}
where $C$ is a constant depending on $n$ and $\mu$.
In particular, by the triangle inequality,
\[
\Bigl\| \prod_{j=1}^n (z-\lambda_j) g \Bigr\|_\mu \leq C \|f\|_\mu.
\]

To complete the proof of (\ref{norm-ineq}) we need to show that
\[
\|g\|_2 \leq C \Bigl\| \prod_{j=1}^n (z-\lambda_j) g \Bigr\|_\mu.
\]
This is obvious if $n=1$. In fact, given $\mu=\alpha_1 \delta_{\lambda_1}$,
we have by \eqref{lint} and \eqref{eq:definition-norme-dirichlet-local},
\begin{eqnarray*}
\|(z-\lambda_1)g\|_\mu^2 & = & \|(z-\lambda_1)g\|_2^2 + \DD_\mu((z-\lambda_1)g)\\
& = & \|(z-\lambda_1)g\|_2^2 +\alpha_1 \DD_{\lambda_1}((z-\lambda_1)g)\\
& = & \|(z-\lambda_1)g\|_2^2 +\alpha_1 \|g\|_2^2\,\geq \alpha_1 \|g\|_2^2.
\end{eqnarray*}

For $n=2$, we have
\begin{eqnarray*}
|\lambda_1-\lambda_2|^2 \|g\|_2^2 & \leq & 2 \,\Bigr(\|(z-\lambda_1) g\|_2^2 + \|(z-\lambda_2)g\|_2^2 \Bigl)\\
\\
& \leq & 2 \,\Bigl\| \prod_{j=1}^2 (z-\lambda_j) g \Bigr\|_\mu^2.
\end{eqnarray*}
The general case follows by induction.
\end{proof}

\section{Carleson measures}

Theorem \ref{Thm-rep} enables us to give a complete description of Carleson measures
in Dirichlet spaces induced by finitely atomic measures.

\begin{Thm}\label{Car-m}
Let $\mu = \sum_{j=1}^n \alpha_j \delta_{\lambda_j}$ be a finitely atomic measure,
$\alpha_j>0$, $\lambda_j\in\T$, $1\leq j\leq n$.
A finite positive Borel measure $\nu$ on $\D$ is a Carleson measure
for $\DD(\mu)$
if and only if $\prod_{j=1}^n |z-\lambda_j|^2 d\nu(z)$ is a Carleson measure for $H^2$.
\end{Thm}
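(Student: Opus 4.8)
The plan is to exploit the representation theorem (Theorem~\ref{Thm-rep}) to reduce boundedness of $I:\DD(\mu)\to L^2(\nu)$ to boundedness of $I:H^2\to L^2(\prod_j|z-\lambda_j|^2\,d\nu)$, transferring the problem from one Hilbert space to the other via the explicit decomposition $f=p+\prod_{j=1}^n(z-\lambda_j)g$. Set $B(z):=\prod_{j=1}^n(z-\lambda_j)$ for brevity and write $\nu_B$ for the measure $\abs{B(z)}^2\,d\nu(z)$.

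First I would prove the ``if'' direction. Assume $\nu_B$ is Carleson for $H^2$. Take $f\in\DD(\mu)$ and write $f=p+Bg$ with $g\in H^2$ and $p$ a polynomial of degree $\leq n-1$, as in \eqref{rep}. Then
\[
\int_\D\abs{f}^2\,d\nu\leq 2\int_\D\abs{p}^2\,d\nu+2\int_\D\abs{Bg}^2\,d\nu
=2\int_\D\abs{p}^2\,d\nu+2\int_\D\abs{g}^2\,d\nu_B.
\]
For the first term, $p$ is a polynomial whose coefficients are controlled (via the Lagrange interpolation formula \eqref{pform} and boundedness of the evaluation functionals $f\mapsto f(\lambda_j)$ on $\DD(\mu)$, and on $H^2$ since $\DD(\mu)\subset H^2$) by $\|f\|_\mu$; since $\nu$ is a finite measure on $\D$ and polynomials of bounded degree live in a finite-dimensional space, $\int_\D\abs p^2\,d\nu\leq C\|p\|_\mu^2\leq C\|f\|_\mu^2$. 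Actually, here one can even absorb $p$ into $H^2$: note $\DD(\mu)\subset H^2$ with $\|f\|_2\leq\|f\|_\mu$, and $B$ is bounded below away from the $\lambda_j$'s, but the cleanest route is the finite-dimensional estimate just described, or simply observing that since $p\in H^2$ and $B$ is bounded, $\int|p|^2d\nu$ need not even use $\nu_B$---one notes $\nu$ restricted to any compact subset of $\D$ is trivially Carleson for $H^2$, and near $\T$ we have $\abs{B}\geq c>0$ on a neighborhood of $\T$ minus small arcs around the $\lambda_j$, which requires a small additional argument. I will use the finite-dimensional estimate to keep things clean. For the second term, the Carleson hypothesis on $\nu_B$ gives $\int_\D\abs g^2\,d\nu_B\leq C\|g\|_2^2\leq C\|f\|_\mu^2$ by the norm inequality \eqref{norm-ineq}. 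Combining, $\nu$ is Carleson for $\DD(\mu)$.

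Conversely, assume $\nu$ is Carleson for $\DD(\mu)$. Given $g\in H^2$, the function $f:=Bg$ lies in $\DD(\mu)$ with $f(\lambda_j)=0$ for all $j$, so its representation \eqref{rep} is $f=0+Bg$ and $\|f\|_\mu\leq C\|g\|_2$ (this is exactly the content of the first half of the proof of Theorem~\ref{Thm-rep}, namely $\|Bg\|_\mu\leq C\|g\|_\mu$, together with $\|Bg\|_2\leq C\|g\|_2$ since $B$ is a bounded multiplier on $H^2$; and the $H^2$ part of $\DD_\mu$ is controlled because $\DD_{\lambda_j}(Bg)=\|g\|_2^2$ by \eqref{eq:local-direct-integral}). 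Hence
\[
\int_\D\abs g^2\,d\nu_B=\int_\D\abs{Bg}^2\,d\nu=\int_\D\abs f^2\,d\nu\leq C\|f\|_\mu^2\leq C\|g\|_2^2,
\]
so $\nu_B$ is Carleson for $H^2$.

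The main obstacle is the bookkeeping in the ``if'' direction with the polynomial term $p$: one must verify that $\int_\D\abs p^2\,d\nu$ is genuinely controlled by $\|f\|_\mu^2$ and not merely by $\|p\|_\infty$ on $\D$. This is handled by noting $\deg p\leq n-1$, so $p$ ranges over a fixed finite-dimensional space of polynomials on which all norms are equivalent; combined with \eqref{pform} and the boundedness of the evaluation functionals at the $\lambda_j$ on $\DD(\mu)$ (established in Section~2 via the uniform boundedness principle for $\DD(\delta_{\lambda_j})$, and hence on the smaller space $\DD(\mu)$), we get $\|p\|_{L^2(\nu)}^2\leq\nu(\D)\,\|p\|_{L^\infty(\D)}^2\leq C\sum_j\abs{f(\lambda_j)}^2\leq C\|f\|_\mu^2$. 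Everything else is a direct application of Theorem~\ref{Thm-rep} and the triangle inequality.
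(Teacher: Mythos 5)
Your proposal is correct and follows essentially the same route as the paper: the forward direction tests $\nu$ on functions $f=\prod_j(z-\lambda_j)g$, and the converse uses the decomposition $f=p+\prod_j(z-\lambda_j)g$ from Theorem~\ref{Thm-rep} together with the boundedness of the evaluations at the $\lambda_j$ and the norm inequality \eqref{norm-ineq}, exactly as in the paper (your treatment of the polynomial term is just a more explicit version of the paper's estimate). One tiny slip: for $n>1$ one has $\DD_{\lambda_j}\bigl(\prod_k(z-\lambda_k)g\bigr)=\bigl\|\prod_{k\neq j}(z-\lambda_k)g\bigr\|_2^2\leq 4^{\,n-1}\|g\|_2^2$ rather than $=\|g\|_2^2$, but this does not affect the bound $\bigl\|\prod_k(z-\lambda_k)g\bigr\|_\mu\leq C\|g\|_2$ you need.
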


\begin{proof}
Assume that $\nu$ is a Carleson measure for $\DD(\mu)$.
Since for every $g\in H^2$,  $f = \prod_{j=1}^n (z-\lambda_j) g \in \DD(\mu)$ , we have
\[
\int_\D \prod_{j=1}^n |z-\lambda_j|^2 |g|^2\, d\nu(z) \leq C \|f\|_\mu^2 \leq C' \|g\|_2^2,
\]
where the last inequality follows easily from the definition of norm in $\DD(\mu)$.
Consequently, $\prod_{j=1}^n |z-\lambda_j|^2 d\nu(z)$ is a Carleson measure for $H^2$.

Conversely, if $\prod_{j=1}^n |z-\lambda_j|^2 d\nu(z)$ is a Carleson measure for $H^2$, we will show
that $\nu$ is a Carleson measure for $\DD(\mu)$.
Let $f \in \DD(\mu)$. By Theorem \ref{Thm-rep},
\[
f = p + \prod_{j=1}^n (z-\lambda_j) g,
\]
where $g \in H^2$ and $p$ is given by (\ref{pform}).
Since the evaluation functionals $f \mapsto f(\lambda_j)$, $1\leq j \leq n$, are bounded on $\DD(\mu)$,
we see that
\begin{eqnarray*}
\int_\D |f|^2 \, d\nu & \leq & C \, \Bigl( \int_\D |p|^2 \, d\nu + \int_\D \prod_{j=1}^n |z-\lambda_j|^2 |g|^2 \,
d\nu \Bigr)\\
\\
& \leq & C \, (\|f\|_\mu^2 + \|g\|_2^2).
\end{eqnarray*}
An appeal to (\ref{norm-ineq}) yields the desired conclusion.
\end{proof}

\begin{Example}\label{exemple-mesure-carleson}
It is clear that every Carleson measure for $H^2$ is a Carleson measure for $\DD(\mu)$ as well.
The converse, however, is not true.
Let $\alpha \in (0,1)$, and let $d\nu(z)=(1-|z|)^{-\alpha} dm(z)$, where $dm$ is the Lebesgue measure on $[0,1)$.
Since $dm$ is a Carleson measure for $H^2$, the preceding result implies that $d\nu$ is indeed a
Carleson measure for $\DD(\delta_1)$. We claim that $d\nu$ is not a Carleson measure for $H^2$.
For $\zeta \in \T$ and $0<h<1$, consider $S(\zeta,h)$ as given by (\ref{Cwin}).
If $S(\zeta,h) \bigcap [0,1) \ne \emptyset$, then
\[
\nu\Bigl(S(\zeta,h)\Bigr)  = \int_{1-h}^1 \frac{dr}{(1-r)^\alpha} = \frac{h^{1-\alpha}}{1-\alpha},
\]
which proves the claim because
\[
\lim_{h \to 0^+} \frac{\nu\Bigl(S(\zeta,h)\Bigr)}{h} = \infty.
\]

In fact, given $0<\beta<1$, if we choose $\alpha$ such that $1-\beta<\alpha<1$, then the same example
gives a Carleson measure $\nu$ for $\DD(\delta_1)$ which satisfies
$\nu(S(\zeta,h))/h^\beta\to +\infty$, as $h\to  0^+$.
That means that we cannot translate directly our characterization of Carleson measures for $\DD(\mu)$
in terms of the asymptotic of $\nu(S(\zeta,h))$.
\end{Example}

As the following result shows, compact Carleson measures for $\DD(\mu)$ are characterized in a similar manner.

\begin{Thm}\label{c-Car-m}
Let $\mu = \sum_{j=1}^n \alpha_j \delta_{\lambda_j}$ be a finitely atomic measure.
A finite positive Borel measure $\nu$ is a compact Carleson measure
for $\DD(\mu)$
if and only if $\prod_{j=1}^n |z-\lambda_j|^2 \, d\nu(z)$ is a compact Carleson measure for $H^2$.
\end{Thm}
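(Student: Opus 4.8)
The plan is to mimic the proof of Theorem~\ref{Car-m} line by line, upgrading boundedness of the embeddings to compactness at each step. First I would record the standard sequential description of compact Carleson embeddings: if $X$ denotes $\DD(\mu)$ or $H^2$ and $\sigma$ is already a Carleson measure for $X$, then $I: X\to L^2(\sigma)$ is compact precisely when $\|f_k\|_{L^2(\sigma)}\to 0$ for every sequence $(f_k)$ that is bounded in $X$ and tends to $0$ uniformly on compact subsets of $\D$. Since a compact operator is bounded, Theorem~\ref{Car-m} lets me assume in both directions that $\nu$ is a Carleson measure for $\DD(\mu)$, equivalently that $\tilde\nu:=\prod_{j=1}^n\abs{z-\lambda_j}^2\,d\nu$ is a Carleson measure for $H^2$; so only the sequential compactness condition has to be transferred between the two embeddings.

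For the forward direction I would take $(g_k)$ bounded in $H^2$ with $g_k\to 0$ uniformly on compact subsets of $\D$, set $f_k:=\prod_{j=1}^n(z-\lambda_j)g_k$, and observe that $(f_k)$ is bounded in $\DD(\mu)$ with $\|f_k\|_\mu\le C\|g_k\|_2$ (as in the proof of Theorem~\ref{Car-m}), and that $f_k\to 0$ uniformly on compacts, since $\abs{f_k}\le 2^n\abs{g_k}$ on $\D$. Compactness of $I:\DD(\mu)\to L^2(\nu)$ then gives $\|f_k\|_{L^2(\nu)}\to 0$, and the identity $\|f_k\|_{L^2(\nu)}^2=\int_\D\prod_{j=1}^n\abs{z-\lambda_j}^2\abs{g_k}^2\,d\nu=\|g_k\|_{L^2(\tilde\nu)}^2$ finishes this implication.

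For the converse I would start from $(f_k)$ bounded in $\DD(\mu)$ with $f_k\to 0$ uniformly on compact subsets of $\D$, and write $f_k=p_k+\prod_{j=1}^n(z-\lambda_j)g_k$ via Theorem~\ref{Thm-rep}, with $p_k$ the interpolation polynomial \eqref{pform} built from the values $f_k(\lambda_j)$ and $g_k\in H^2$. The key preliminary step is to show $f_k(\lambda_j)\to 0$: a bounded sequence in $\DD(\mu)$ has weakly convergent subsequences whose limits vanish at every point of $\D$, hence $f_k\to 0$ weakly in $\DD(\mu)$, and since $f\mapsto f(\lambda_j)$ is a bounded functional on $\DD(\mu)$ this forces $f_k(\lambda_j)\to 0$ for $1\le j\le n$. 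Consequently $p_k\to 0$ uniformly on $\overline\D$, so $\|p_k\|_{L^2(\nu)}\to 0$ (as $\nu$ is finite). By \eqref{norm-ineq} the sequence $(g_k)$ is bounded in $H^2$, and since $g_k=(f_k-p_k)/\prod_{j=1}^n(z-\lambda_j)$ while $\prod_{j=1}^n\abs{z-\lambda_j}$ is bounded below on each compact subset of $\D$, we get $g_k\to 0$ uniformly on compacts. Compactness of $I:H^2\to L^2(\tilde\nu)$ then yields $\int_\D\prod_{j=1}^n\abs{z-\lambda_j}^2\abs{g_k}^2\,d\nu\to 0$, and the elementary bound $\int_\D\abs{f_k}^2\,d\nu\le 2\int_\D\abs{p_k}^2\,d\nu+2\int_\D\prod_{j=1}^n\abs{z-\lambda_j}^2\abs{g_k}^2\,d\nu$ shows $\|f_k\|_{L^2(\nu)}\to 0$, so $\nu$ is a compact Carleson measure for $\DD(\mu)$.

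I expect the only real obstacle to be the step just highlighted, namely deducing $f_k(\lambda_j)\to 0$ at the boundary points $\lambda_j\in\T$ from uniform convergence on compact subsets of $\D$ alone; this is precisely why one must bring in weak convergence in $\DD(\mu)$ together with the (crucial) boundedness of the evaluation functionals at the $\lambda_j$. Everything else is a routine transcription of the argument for Theorem~\ref{Car-m}.
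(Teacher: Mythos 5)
Your argument is correct and follows essentially the same route as the paper: decompose $f_k=p_k+\prod_j(z-\lambda_j)g_k$ via Theorem~\ref{Thm-rep}, use \eqref{norm-ineq} and the boundedness of the evaluation functionals at the $\lambda_j$, and transfer compactness through multiplication by $\prod_j(z-\lambda_j)$. The only difference is cosmetic: you phrase compactness via bounded sequences converging to zero uniformly on compacta, while the paper works directly with weakly null sequences, and these formulations are equivalent here.
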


\begin{proof}
Suppose that $\nu$ is a compact Carleson measure for $\DD(\mu)$.
Let $(g_k)_{k \geq 1} \subset H^2$ such that $(g_k)_k$ converges weakly to zero,
as $k \to \infty$. Put $f_k= \prod_{j=1}^n (z-\lambda_j) g_k$, $k \geq 1$.
Since the operator $T: H^2 \to \DD(\mu)$ of multiplication by
$\prod_{j=1}^n (z-\lambda_j)$ is bounded, it follows that $f_k \to 0$ weakly in $\DD(\mu)$.
So, the assumption that $I: \DD(\mu) \to L^2(\nu)$ is compact
implies that
\[
\int_{\D} |f_k|^2 \, d\nu(z) = \int_\D |g_k|^2 \prod_{j=1}^n |z-\lambda_j|^2 \, d\nu(z)
\to 0, \hspace{1cm} (k \to \infty).
\]
Therefore, $\prod_{j=1}^n |z-\lambda_j|^2\,d\nu(z)$ is a compact Carleson measure for $H^2$.

For the converse, assume that $(f_k)_k$ is a sequence in $\DD(\mu)$ which converges weakly to zero.
We will show that $\|f_k\|_{L^2(\nu)} \to 0$, as $k \to \infty$.
Indeed, by Theorem (\ref{Thm-rep}), $f_k= p_k + \prod_{j=1}^n (z-\lambda_j) g_k$, where $g_k \in H^2$ and
\[
p_k(z) = \sum_{j=1}^n  \prod_{m\neq j} \frac{z-\lambda_m}{\lambda_j-\lambda_m} \, f_k(\lambda_j),
\hspace{1cm} (k\geq 1).
\]
By (\ref{norm-ineq}), we see that $(g_k)_k$ is a bounded sequence in $H^2$.
Moreover, since the evaluation functionals $f \mapsto f(\lambda_j)$ are bounded on $\DD(\mu)$,
for $1 \leq j \leq n$,
and $f_k$ converges to zero uniformly on compact subsets of $\D$, we see that
$g_k \to 0$ uniformly on compacts.
In other words, $(g_k)_k$ is a sequence converging weakly to zero in $H^2$.
Hence,
\begin{eqnarray*}
\int_\D |f_k|^2 \, d\nu & \leq & C \Bigl( \int_\D |p_k(z)|^2 \, d\nu(z) + \int_\D |g_k|^2 \,
\prod_{j=1}^n |z-\lambda_j|^2 \, d\nu(z) \Bigr)\\
\\
& \leq & C \Bigl( \sum_{j=1}^n |f_k(\lambda_j)|^2 + \int_\D |g_k|^2 \,
\prod_{j=1}^n |z-\lambda_j|^2 \, d\nu(z) \Bigr)\, \to \, 0,
\end{eqnarray*}
as $k \to \infty$. The proof is now complete.

\end{proof}

\begin{Rem}
In \cite{Chartrand}, R. Chartrand introduces a notion of a Carleson type measure in a different way than the one
used here and in \cite{GR1}. In fact, in \cite{GR1}, exhibiting two examples, it is proved that the two definitions
are really differents.
\end{Rem}

\section{Reproducing kernel thesis for one point mass}

Dirichlet-type spaces $\DD(\mu)$ are reproducing kernel Hilbert spaces.
Here, we will prove a reproducing kernel thesis result in the case when $\mu$ is a finite sum of point masses
(Theorem~\ref{thesis-thm} and Theorem~\ref{thm:rkt-finite}). One of the difficulties here is to
obtain an explicit expression for the reproducing kernels. As far as we know, except for the case of
$\DD(\delta_\lambda)$, such an explicit expression is not known.

Improving an earlier result of Richter and Sundberg \cite{RichSun91},
Sarason \cite{Sar97} showed that if $\mu = \delta_\lambda$,
then $\DD(\mu)$ can be identified with a de Branges-Rovnyak space.
More precisely, Sarason proved that $\DD(\delta_\lambda)=\HH(b_\lambda)$, with the equality of norms, where
\begin{equation}\label{kernel}
b_\lambda(z)= \frac{(1-a_0) \overline{\lambda} z}{1-a_0 \overline{\lambda} z}, \hspace{1cm} (z \in \D),
\end{equation}
where $a_0$ is the smallest root of $(a_0-1)^2=a_0$. Recall here that, given $b$ in the closed unit ball of
$H^\infty$, the de Branges--Rovnyak space $\HH(b)$ is defined as the range of the operator $(I-T_bT_b^*)^{1/2}$,
equipped with the range norm, where $T_b$ denotes the Toeplitz operator on $H^2$
(which, in this case, corresponds to the operator of multiplication  by $b$ on $H^2$).
Then we know \cite{sarason-debr} that $\HH(b)$ is a reproducing kernel Hilbert space, whose reproducing kernel
functions are given by
\[
k_w^b(z)=\frac{1-\overline{b(w)} b(z)}{1-\overline{w}z}, \hspace{1cm} (w,z \in \D).
\]
Now if $\mu=\alpha \delta_\lambda$, with $\alpha>0$, then one still has $\DD(\mu)=\HH(b_\lambda)$, except that
$a_0$ in \eqref{kernel} has to be replaced by the smallest root of the equation $(a_0-1)^2=\alpha a_0$ instead.
This is in fact proved in \cite{Ransford-Guillot-Chevrot} where it is also shown that this is the only case
where $\DD(\mu)$ arises as a de Branges--Rovnyak space (see also \cite{GR2}).

Using this identification, we first prove the following reproducing kernel thesis in the special case of
a point mass.

\begin{Thm}\label{thesis-thm}
Let $\nu$ be a finite positive Borel measure on $\D$, and let $\mu=\alpha \delta_\lambda$, where $\lambda\in\T$
and $\alpha>0$. Then $\nu$ is a Carleson measure for $\DD(\mu)$ if and only if there exists a
constant $C>0$ such that
\begin{equation}\label{C-thesis-1}
\int_{\mathbb{D}}
|k_w^\mu|^2d\nu \leq C \|k_w^\mu\|_{\mu}^2,
\end{equation}
for every $w\in\D$, where, $k_w^\mu$ is the reproducing kernel for $\DD(\mu)$ at $w$.
\end{Thm}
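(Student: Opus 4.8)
One direction is trivial: if $\nu$ is a Carleson measure for $\DD(\mu)$, then inequality \eqref{C-thesis-1} follows immediately by applying the Carleson embedding to the particular functions $f=k_w^\mu\in\DD(\mu)$. So the whole content is the converse: assuming the testing condition \eqref{C-thesis-1} on reproducing kernels, deduce that $\nu$ is a genuine Carleson measure for $\DD(\mu)$.

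For the converse I would exploit the identification $\DD(\alpha\delta_\lambda)=\HH(b_\lambda)$ with equality of norms, recalled just before the statement, where $b_\lambda(z)=\dfrac{(1-a_0)\overline\lambda z}{1-a_0\overline\lambda z}$ and $a_0$ is the smallest root of $(a_0-1)^2=\alpha a_0$. Under this identification the reproducing kernel $k_w^\mu$ equals $k_w^{b_\lambda}(z)=\dfrac{1-\overline{b_\lambda(w)}\,b_\lambda(z)}{1-\overline w z}$. The key point is that $b_\lambda$ is a very simple function — a disc automorphism composed with a scaling, or more precisely a single Blaschke-type fraction with one pole at $\overline\lambda/a_0$ outside $\overline{\D}$ — so $b_\lambda$ extends analytically across $\T$ except at the point $\lambda$, $\|b_\lambda\|_\infty=1$, and $|b_\lambda|$ is bounded away from $1$ away from $\lambda$. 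Consequently, for $w$ in a fixed relatively compact subset of $\D$ (say $|w|\le 1/2$), the kernels $k_w^\mu$ form a norm-bounded, locally uniformly bounded family, and the testing condition there reduces to the uniform local boundedness of $\nu$; the real work is the regime $w\to\lambda$. Here one should compare $k_w^{b_\lambda}$ with the Cauchy kernel $k_w(z)=(1-\overline w z)^{-1}$ of $H^2$: since $b_\lambda(w)\to b_\lambda(\lambda)$, a unimodular constant, as $w\to\lambda$ nontangentially, one has $1-\overline{b_\lambda(w)}b_\lambda(z)\asymp 1-\overline{b_\lambda(w)}b_\lambda(z)$, and a direct computation shows $\|k_w^\mu\|_\mu^2=k_w^\mu(w)=\dfrac{1-|b_\lambda(w)|^2}{1-|w|^2}$, which is comparable to the $H^2$-Carleson-window size. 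I would then run the standard "test on Carleson boxes" argument: given a Carleson window $S(\zeta,h)$, choose $w=w_{\zeta,h}=(1-h)\zeta$ its centre; on $S(\zeta,h)$ one has $|k_w^\mu(z)|\gtrsim 1/h$ when $\zeta$ stays away from $\lambda$, and when $\zeta$ is near $\lambda$ one instead picks up the extra factor $|z-\lambda|^2$ coming from $1-|b_\lambda(w)|^2\asymp h\cdot(\text{stuff})$, so that \eqref{C-thesis-1} forces $\nu(S(\zeta,h))\lesssim h$ in the first case and $\int_{S(\zeta,h)}|z-\lambda|^2\,d\nu\lesssim h$ in the second. By Theorem~\ref{Car-m} (the case $n=1$), this last family of inequalities is exactly equivalent to $\nu$ being a Carleson measure for $\DD(\delta_\lambda)$, hence for $\DD(\mu)$.

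An alternative, cleaner route — and the one I would actually write up — bypasses Carleson boxes entirely and goes through Theorem~\ref{Car-m}: it suffices to show that the testing condition \eqref{C-thesis-1} implies that $|z-\lambda|^2\,d\nu(z)$ is a Carleson measure for $H^2$, i.e. that $\int_\D |z-\lambda|^2|h(z)|^2\,d\nu(z)\lesssim\|h\|_2^2$ for all $h\in H^2$. Since the $H^2$ embedding itself satisfies the reproducing kernel thesis (Carleson's theorem, quoted in the introduction), it is enough to verify this for $h=k_w$, the Szegő kernel, for all $w\in\D$. For that I would relate $(z-\lambda)k_w(z)$ to the $\DD(\mu)$ reproducing kernels: writing $k_w^\mu$ in closed form via $\HH(b_\lambda)$ and using that $(z-\lambda)H^2\subset\DD(\mu)$ with $\|(z-\lambda)h\|_\mu^2=\|(z-\lambda)h\|_2^2+\alpha\|h\|_2^2$, one checks that $(z-\lambda)k_w(z)$ is, up to a bounded multiple and an additive $\DD(\mu)$-kernel at $w$ (whose $\nu$-mass is controlled by \eqref{C-thesis-1}), of the same type as $k_w^\mu$; pushing \eqref{C-thesis-1} through this decomposition gives $\int_\D|z-\lambda|^2|k_w|^2\,d\nu\lesssim\|k_w\|_2^2$ with a uniform constant, and the RKT for $H^2$ closes the argument.

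\textbf{The main obstacle.} The delicate point is controlling the kernels $k_w^\mu$ as $w\to\lambda$: one must show that the asymptotics of $\|k_w^\mu\|_\mu^2=\dfrac{1-|b_\lambda(w)|^2}{1-|w|^2}$ and of $|k_w^\mu(z)|$ on the relevant Carleson box are matched precisely by the weight $|z-\lambda|^2$ that appears in Theorem~\ref{Car-m}. This is where the explicit form \eqref{kernel} of $b_\lambda$ — in particular that $1-b_\lambda(z)=\dfrac{1-a_0\overline\lambda z - (1-a_0)\overline\lambda z}{1-a_0\overline\lambda z}=\dfrac{1-\overline\lambda z}{1-a_0\overline\lambda z}$ vanishes to first order exactly at $z=\lambda$ — is essential, and handling the unimodular constant $b_\lambda(\lambda)$ and the nontangential-versus-oricyclic approach of $w$ to $\lambda$ requires some care. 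Away from $\lambda$ everything is routine since $\DD(\mu)$ and $H^2$ have comparable norms on functions supported away from $\lambda$, and the kernels there are just perturbations of Szegő kernels.
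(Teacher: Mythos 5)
Your outline starts along the same lines as the paper's proof: the forward direction is indeed trivial, and for the converse you correctly reduce, via Theorem~\ref{Car-m} and Carleson's reproducing kernel thesis for $H^2$, to showing that the testing condition \eqref{C-thesis-1} forces $\int_\D |z-\lambda|^2\,|1-\overline w z|^{-2}\,d\nu(z)\lesssim (1-|w|^2)^{-1}$ for all $w\in\D$, i.e.\ \eqref{C-thesis-3}. But precisely at this point there is a genuine gap: what you defer to ``the main obstacle'' is the entire content of the theorem, and neither of your two routes supplies it. The paper closes this step with the pointwise inequality \eqref{eq:rkt-cle-one-point}, which, after the elementary bound $\frac{(1-|\gamma|^2)(1-|\beta|^2)}{|1-\overline\gamma\beta|^2}\le 1$, reduces to $\inf_{z\in\D}\frac{1-|b_\lambda(z)|^2}{|z-\lambda|^2}>0$, i.e.\ \eqref{inf-2}--\eqref{inf-3}; this is proved by a short computation exploiting $(1-a_0)^2=\alpha a_0$, which gives (for $\lambda=1$) $1-|b_1(z)|^2=\bigl(a_0|z-1|^2+(1-a_0)(1-|z|^2)\bigr)/|1-a_0z|^2$. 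Your observation that $1-b_\lambda(z)=\frac{1-\overline\lambda z}{1-a_0\overline\lambda z}$ vanishes to first order at $\lambda$ only yields $|1-b_\lambda(z)|\asymp|z-\lambda|$, and that is \emph{not} the required estimate: in general $1-|b(z)|^2$ can be far smaller than $|1-b(z)|^2$ (take $b(z)=z$), so the quadratic lower bound on $1-|b_\lambda|^2$ must be proved, not inferred from the order of vanishing of $1-b_\lambda$.

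Both of your routes stumble on this same estimate. In the Carleson-box route, the windows near $\lambda$ are exactly where a lower bound of the type $|1-\overline{b_\lambda(w)}b_\lambda(z)|^2\gtrsim (1-|b_\lambda(w)|^2)\,|z-\lambda|^2$ is needed, and you only assert that one ``picks up the extra factor $|z-\lambda|^2$''. In the route you say you would actually write up, the proposed decomposition of $(z-\lambda)k_w$ into a multiple of $k_w^\mu$ plus a remainder of controlled $\nu$-mass fails quantitatively: writing (for $\lambda=1$) $(z-1)k_w(z)=A(w)\,k_w^\mu(z)+B(w)\,(1-a_0z)^{-1}+C(w)$, a partial-fraction computation gives $|A(w)|\asymp|B(w)|\asymp|1-w|^{-1}$ as $w\to 1$, while $\|k_w^\mu\|_\mu^2\asymp\bigl(|1-w|^2+1-|w|^2\bigr)/(1-|w|^2)$; the triangle inequality then only produces a bound of order $|1-w|^{-2}$, which is not $\lesssim(1-|w|^2)^{-1}$ even for radial $w\to1$. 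The cancellation you would need is exactly what the pointwise domination \eqref{eq:rkt-cle-one-point} encodes, so without proving \eqref{inf-2} (or an equivalent estimate such as \eqref{inf-3}) the argument is incomplete.
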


\begin{proof}
Suppose that (\ref{C-thesis-1}) holds. By the paragraph preceding the theo\-rem,
we have
\begin{equation}\label{C-thesis-2}
\int_{\D} \Bigl| \frac{1-\overline{b_\lambda(w)} b_\lambda(z)}{1-\overline{w} z} \Bigr|^2\, d\nu(z)
\,\leq\, C \frac{1-|b_\lambda(w)|^2}{1-|w|^2}, \hspace{1cm} (w \in \D),
\end{equation}
where $b_\lambda$ is given by (\ref{kernel}) and $a_0 \in (0,1)$ satisfies $(a_0-1)^2=\alpha a_0$.
We will show that (\ref{C-thesis-2}) implies
\begin{equation}\label{C-thesis-3}
\int_{\D} \frac{1}{|1-\overline{w}z|^2} \, |z-\lambda|^2 \, d\nu(z) \leq C_1 \, \frac{1}{1-|w|^2}, \hspace{1cm}
(w \in \D),
\end{equation}
for some positive constant $C_1$.
Inequality (\ref{C-thesis-3}), in conjunction with Carleson's reproducing kernel thesis for $H^2$, implies that
$|z-\lambda|^2 d\nu(z)$ is a Carleson measure for $H^2$. The desired conclusion then follows from
Theorem \ref{Car-m}.

To prove (\ref{C-thesis-3}), it is sufficient to show that
\begin{equation}\label{eq:rkt-cle-one-point}
\frac{1-|w|^2}{|1-\overline{w}z|^2} \, |z-\lambda|^2 \,\leq\, C_2 \,
\Bigl| \frac{1-\overline{b_\lambda(w)} b_\lambda(z)}{1-\overline{w} z} \Bigr|^2\,
\frac{1-|w|^2}{1-|b_\lambda(w)|^2}, \qquad (z,w \in \D),
\end{equation}
or equivalently
\begin{equation}\label{inf-1}
\inf_{z,w \in \D} \frac{\Bigl| 1-\overline{b_\lambda(w)} b_\lambda(z)
\Bigr|^2}{(1-|b_\lambda(w)|^2)|z-\lambda|^2} \,>\, 0.
\end{equation}
Using the inequality
\[
\frac{(1-|\gamma|^2)(1-\beta|^2)}{|1-\overline{\gamma}\beta|^2} =
1-\Bigl|\frac{\gamma-\beta}{1-\overline{\gamma}\beta}\Bigr|^2 \leq 1,
\hspace{1cm} (\gamma,\beta \in \D),
\]
we see that (\ref{inf-1}) is proved as soon as we show that
\begin{equation}\label{inf-2}
\inf_{z\in \D} \frac{1-|b_\lambda(z)|^2}{|z-\lambda|^2} \,>\, 0.
\end{equation}
Note that condition \eqref{inf-2} follows immediately from
\begin{equation}\label{inf-3}
1-|b_\lambda(z)|^2\geq \frac{a_0|z-\lambda|^2}{|\lambda-a_0z|^2},\qquad z\in\D.
\end{equation}
To prove \eqref{inf-3}, we can of course assume that $\lambda=1$, and then it reduces to
\begin{equation}\label{inf-4}
1-|b_1(z)|^2\geq \frac{a_0|z-1|^2}{|1-a_0z|^2},\qquad z\in\D.
\end{equation}
But using the identity $(1-a_0)^2=\alpha a_0$, we have
\begin{eqnarray*}
1-|b_1(z)|^2&=&1-\frac{\alpha a_0|z|^2}{|1-a_0z|^2}=\frac{|1-a_0z|^2-\alpha a_0|z|^2}{|1-a_0z^2|}\\
&=&\frac{1+a_0^2 |z|^2-2a_0\Re(z)- \alpha a_0 |z|^2}{|1-a_0z|^2}.
\end{eqnarray*}
Therefore \eqref{inf-4} is equivalent to
\[
1+a_0^2 |z|^2-2a_0\Re(z)-\alpha a_0 |z|^2\geq a_0(1+|z|^2-2\Re(z))
\]
or
\[
1-a_0+a_0(a_0-\alpha-1)|z|^2\geq 0.
\]
Using now the relation $a_0(a_0-\alpha-1)=a_0-1$, we obtain immediately the last inequality for every $z\in\D$.
That proves \eqref{inf-4} and the result.
\end{proof}

\begin{Rem}
In \cite{Ba-Fr-Ma}, a reproducing kernel thesis is proved for the embedding of $\HH(b)$ into $L^2(\nu)$
in the case where the function $b$ satisfied some additional hypothesis. More precisely, assume that $b$
satisfies the so-called connected level set condition (which means that for some $\varepsilon\in (0,1)$,
the level set $\Omega(b,\varepsilon):=\{z\in\D:|b(z)|<\varepsilon\}$ is connected) and assume further
that the spectrum of $b$, $\sigma(b)=\{\zeta\in\T:\liminf_{z\to\zeta}|b(z)|<1\}$, is contained in the
closure of $\Omega(b,\varepsilon)$. Then the operator $f\mapsto f$ is bounded from $\HH(b)$ into $L^2(\nu)$
if and only if it is bounded on reproducing kernels.

In our case (which corresponds to a Dirichlet-type space), the hypothesis
$\sigma(b_\lambda)\subset\hbox{Clos }\Omega(b_\lambda,\varepsilon)$ is not satisfied, whence we cannot apply
\cite[Theorem 6.8]{Ba-Fr-Ma} to get Theorem~\ref{thesis-thm}.
Indeed, we easily check that $\sigma(b_\lambda)=\T \setminus \{\lambda\}$;
now if $\sigma(b_\lambda)\subset\hbox{Clos }\Omega(b_\lambda,\varepsilon)$, then we would have
$\lambda\in\hbox{Clos }\Omega(b_\lambda,\varepsilon)$, but this is absurd by continuity of $b_\lambda$
on the closed unit disc.
\end{Rem}

%
%
%
%
%
%
%

\section{Reproducing kernel thesis for $\mu=\sum_{j=1}^n \alpha_j \delta_{\lambda_j}$}

As we have ever mentioned, in the case where $\mu$ is a finite sum of point masses, an explicit formula for the
reproducing kernels is not known (except for one point mass). The reproducing kernel thesis can however be shown
without such an explicit formula by using some techniques from \cite{GR2}. The main idea is to relate the
reproducing kernels in this case to the reproducing kernels in the case in which $\mu$ is an atomic measure and
then use Theorem~\ref{thesis-thm}.

\begin{Thm}\label{thm:rkt-finite}
Let $\nu$ be a finite positive Borel measure on $\D$, and let $\mu= \sum_{j=1}^n \alpha_j \delta_{\lambda_j}$,
$\lambda_j\in\T$, $\alpha_j>0$. Then $\nu$ is a Carleson measure for $\DD(\mu)$ if and only if there exists a
constant $C>0$ such that
\[
\int_{\mathbb{D}}
|k_w^\mu|^2d\nu \leq C \|k_w^\mu\|_{\mu}^2,
\] for every $w\in\D$, where, $k_w^\mu$ is the reproducing kernel for $\DD(\mu)$ at $w$.
\end{Thm}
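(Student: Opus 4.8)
The direction ``$\nu$ Carleson $\Rightarrow$ kernel inequality'' is immediate: inserting $f = k_w^\mu$ into \eqref{cm} gives $\int_\D |k_w^\mu|^2\, d\nu \le C \|k_w^\mu\|_\mu^2$ for all $w\in\D$. So the content is the converse, and the plan is to reduce it, by peeling off one atom, to the single-atom Theorem~\ref{thesis-thm}, whose reproducing kernel is known explicitly.

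Concretely, I would set $\rho := \prod_{j=1}^{n-1}|z-\lambda_j|^2\, d\nu(z)$ (a finite positive Borel measure on $\D$) and apply Theorem~\ref{Car-m} twice --- once to $\mu$ and once to the one-atom measure $\alpha_n\delta_{\lambda_n}$ --- to get
\[
\nu \text{ Carleson for } \DD(\mu) \iff \prod_{j=1}^n|z-\lambda_j|^2\, d\nu \text{ Carleson for } H^2 \iff \rho \text{ Carleson for } \DD(\alpha_n\delta_{\lambda_n}).
\]
By Theorem~\ref{thesis-thm}, the last condition is equivalent to the existence of $C>0$ such that
\begin{equation}\label{plan-single}
\int_\D |k_w^{b_{\lambda_n}}(z)|^2 \prod_{j=1}^{n-1}|z-\lambda_j|^2\, d\nu(z) \le C\, k_w^{b_{\lambda_n}}(w), \qquad w\in\D,
\end{equation}
where $b_{\lambda_n}$ is the de Branges--Rovnyak function attached to $\alpha_n\delta_{\lambda_n}$ by \eqref{kernel}, $k_w^{b_{\lambda_n}}$ is the reproducing kernel of $\HH(b_{\lambda_n}) = \DD(\alpha_n\delta_{\lambda_n})$, and $k_w^{b_{\lambda_n}}(w) = (1-|b_{\lambda_n}(w)|^2)/(1-|w|^2)$. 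Thus it is enough to show that the hypothesis $\int_\D |k_w^\mu|^2\, d\nu \le C_0\, k_w^\mu(w)$ forces \eqref{plan-single}, and since $k_w^\mu(w) = \|k_w^\mu\|_\mu^2$ this follows immediately from the pointwise comparison of normalized reproducing kernels
\begin{equation}\label{plan-ptwise}
\frac{|k_w^{b_{\lambda_n}}(z)|^2}{k_w^{b_{\lambda_n}}(w)}\, \prod_{j=1}^{n-1}|z-\lambda_j|^2 \le C_1\, \frac{|k_w^\mu(z)|^2}{k_w^\mu(w)}, \qquad z,w\in\D.
\end{equation}

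The real work is \eqref{plan-ptwise} --- a lower bound for $k_w^\mu/\|k_w^\mu\|_\mu$ in terms of the explicit normalized kernel of $\HH(b_{\lambda_n})$ --- and it is here that I would use the techniques of \cite{GR2} to handle $k_w^\mu$ despite the absence of a closed formula. The ingredients are: (i) Theorem~\ref{Thm-rep}, which represents both $k_w^\mu$ and the auxiliary function $F_w := k_w^{b_{\lambda_n}}\prod_{j=1}^{n-1}(z-\lambda_j) \in \DD(\mu)$ as a polynomial of degree $\le n-1$ plus $\prod_{j=1}^n(z-\lambda_j)$ times an $H^2$ function, together with the attendant two-sided estimate $\|f\|_\mu \asymp \bigl(\sum_{j=1}^n |f(\lambda_j)|^2\bigr)^{1/2} + \|g\|_2$ coming out of \eqref{norm-ineq} and the boundedness of the evaluations $f\mapsto f(\lambda_j)$; (ii) the contractive inclusions $\DD(\mu) \hookrightarrow \DD(\alpha_j\delta_{\lambda_j}) = \HH(b_{\lambda_j})$, whose adjoints carry $k_w^{b_{\lambda_j}}$ to $k_w^\mu$, so that $\|k_w^\mu\|_\mu \le \|k_w^{b_{\lambda_j}}\|$ for every $j$ and $\|F_w\|_\mu \lesssim \|k_w^{b_{\lambda_n}}\|$; and (iii) the single-atom estimate \eqref{inf-3}, namely $1 - |b_{\lambda_j}(z)|^2 \ge a_0|z-\lambda_j|^2/|\lambda_j - a_0 z|^2$, and its uniform consequence $\inf_{z\in\D}(1-|b_{\lambda_j}(z)|^2)/|z-\lambda_j|^2 > 0$. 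Comparing $k_w^\mu$ with $F_w$ through their Theorem~\ref{Thm-rep} decompositions, controlling the difference by \eqref{norm-ineq} and (iii), and renormalizing via (ii) should yield \eqref{plan-ptwise}.

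I expect the main obstacle to be the behaviour of $k_w^\mu$ when $w$ approaches one of the atoms $\lambda_i$. There $\DD(\mu)$ degenerates --- the evaluation at $\lambda_i$ becomes a bounded functional on $\DD(\mu)$, so $k_w^\mu \to k_{\lambda_i}^\mu$ in $\DD(\mu)$ --- hence $k_w^\mu$ ceases to resemble the $H^2$ reproducing kernel and the estimates associated with the various atoms start to interact. I would keep this under control by an induction on the number $n$ of atoms: the base case $n=1$ is Theorem~\ref{thesis-thm}, one atom is removed at each step by the reduction above, and the remaining steps are routine manipulations with \eqref{norm-ineq} and \eqref{inf-3}.
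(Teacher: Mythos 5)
Your reduction is exactly the paper's: the forward direction is trivial, and via Theorem~\ref{Car-m} together with Carleson's reproducing kernel thesis for $H^2$ (equivalently, via the single-atom Theorem~\ref{thesis-thm} and the estimate \eqref{eq:rkt-cle-one-point}) everything comes down to a pointwise comparison of normalized reproducing kernels, namely that $\dfrac{1-|w|^2}{|1-\overline{w}z|^2}\prod_{i=1}^n|z-\lambda_i|^2 \lesssim \dfrac{|k_w^\mu(z)|^2}{\|k_w^\mu\|_\mu^2}$. Up to that point the proposal is correct.

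The gap is in the step you yourself flag as ``the real work'': you never actually prove that pointwise comparison, and the ingredients you propose cannot do it. The decomposition of Theorem~\ref{Thm-rep} with the norm bound \eqref{norm-ineq}, the contractive inclusions $\DD(\mu)\hookrightarrow\HH(b_{\lambda_j})$ (giving $\|k_w^\mu\|_\mu\le\|k_w^{b_{\lambda_j}}\|$), and \eqref{inf-3} all produce \emph{upper} bounds of norm or integral type, whereas the inequality you need is a \emph{pointwise lower} bound on $|k_w^\mu(z)|$ for all $z,w\in\D$; an $H^2$ bound on the function $g_w$ in the decomposition $k_w^\mu=p_w+\prod_j(z-\lambda_j)g_w$ says nothing about how small $k_w^\mu(z)$ can be at a given point. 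Note also that your inequality $\|k_z^\mu\|_\mu\le\|k_z^{b_{\lambda_j}}\|$ points the wrong way for the ratio needed at the variable $z$: what is required is the reverse-type estimate \eqref{eqenough2}, a lower bound for $\|k_z^\mu\|_\mu^2/\|k_z^{\mu_j}\|_{\mu_j}^2$ by $\prod_{i\neq j}|z-\lambda_i|^2$, and that is the nontrivial part. The paper obtains the pointwise comparison from machinery absent from your sketch: Shimorin's theorem that $\DD(\mu)$ has a complete Nevanlinna--Pick kernel, the McCullough--Trent theorem, and the Richter--Sundberg--Aleman structure theorem for the multiplier invariant subspaces $\mathcal{M}_j=\{f\in\DD(\mu): f(\lambda_i)=0,\ i\neq j\}$, combined in Lemma~\ref{lemposdef} to show that $k^{\mu_j}/k^\mu$ is positive definite, which yields \eqref{eqposdef}; the remaining estimate \eqref{eqenough2} is then proved by showing that $M_{p_j\phi_j^{-1}}$ is a bounded operator on $\mathcal{M}_j$ (closed graph theorem) and running a reproducing-kernel argument. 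Without Lemma~\ref{lemposdef} or a substitute for it, ``comparing the Theorem~\ref{Thm-rep} decompositions of $k_w^\mu$ and $F_w$'' does not close; and the proposed induction on $n$ does not help, because however many atoms you peel off the measure $\nu$, the right-hand side of the key inequality still involves the full $n$-atom kernel $k_w^\mu$, for which no inductive lower bound is provided.
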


%
%
Before proving this theorem, we need some preliminary results. First, we will need a general result about
complete Nevannlina-Pick reproducing kernels.
Recall that a reproducing kernel $k$ on the unit disc is a {\em complete Nevannlina-Pick kernel
(complete NP kernel)} if $k_0(z)=1$ for all $z\in\D$ and if there exists a sequence of analytic functions
$\{b_n\}_{n\geq 1}$ on $\D$ such that $$1-\frac{1}{k_\lambda (z)}=\sum_{n\geq 1} b_n(z)\overline{b_n(\lambda)},
\qquad\text{for all } \lambda, z\in \D.$$ This condition is equivalent to the assumption that $1-1/k$
is positive definite. Shimorin in \cite{Shi} showed that the $\DD(\mu)$ spaces have a complete NP kernel.
The first result we will need is due to McCullough and Trent \cite{MT}.
We will say that a subspace ${\mathcal M}$ of a Hilbert space $H$ is a {\em multiplier invariant subspace} if
$\varphi{\mathcal M}\subset {\mathcal M}$ for every $\varphi\in M(H)$, the space of multipliers of $H$.

\noindent {\bf Theorem A} (McCullough and Trent \cite{MT}){\bf .}
{\em Let $k$ be a complete NP kernel and let ${\mathcal M}$ be a multiplier invariant subspace.
Then there exists a sequence of multipliers $\{\varphi_n\}\subset{\mathcal M}$ such that
$$P_{\mathcal M}=\sum_{n\geq 1}M_{\varphi_n}M^\ast_{\varphi_n}\, (SOT)$$ where $P_{\mathcal M}$
denotes the projection onto ${\mathcal M}$ and $M_{\varphi_n}$ denotes the multiplication operator,
$f\mapsto \varphi_n f$, and the series converges in the strong operator topology.}


In particular, notice that if we take the function $k_z$, $z\in\D$, we have
$$P_{\mathcal M}k_z=\sum_{n\geq 1}M_{\varphi_n}M^\ast_{\varphi_n}k_z.$$
Since $M^\ast_{\varphi_n}k_z=\overline{\varphi_n(z)}k_z$, then, for every $w\in \D$, we have
$$
P_{\mathcal M}k_z(w)=\sum_{n\geq 1} \varphi_n (w)\overline{\varphi_n(z)}k_z(w),
$$
or equivalently,
\begin{equation}\label{eqposdef2}
\frac{P_{\mathcal M}k_z(w)}{k_z(w)}=\sum_{n\geq 1} \varphi_n (w)\overline{\varphi_n(z)},
\end{equation}
i.e., $\frac{P_{\mathcal M}k_z(w)}{k_z(w)}$ is positive definite.

We will also need the following result which is due to Richter and Sundberg \cite{RichSun91, RS92} and Aleman \cite{A93}.

\noindent {\bf Theorem B}{\bf .}
{\em
Let ${\mathcal M}$ be a multiplier invariant subspace of $\DD(\mu)$.
Then ${\rm dim}({\mathcal M}\ominus z{\mathcal M})=1$, and if $f\in {\mathcal M}\ominus z{\mathcal M}$,
with $\|f\|_{\mu}=1$,
then
\begin{itemize}
\item[\textup{(i)}] $|f(z)|\leq 1$ for all $z\in\D$.
\item[\textup{(ii)}] $\|fg\|_{\mu}=\|g\|_{\mu_f}$, for every $g\in \DD(\mu_f)$,
where $d\mu_f=|f|^2d\mu$.
\item[\textup{(iii)}]For every $g\in {\mathcal M}$, there exists $h\in \DD(\mu_f)$ such that $g=fh$.
\end{itemize}}
The following lemma which already appeared in \cite{GR2} is the key point in the proof of our theorem.

\begin{Lem}\label{lemposdef}
Let $\mu=\sum_{j=1}^n \alpha_j\delta_{\lambda_j}$, $\lambda_j\in\T$ and $\alpha_j>0$. Then, for every
$j=1,\dots,n$, there exists a positive constant $a_j$ such that if $\mu_j=a_j\delta_{\lambda_j}$, then
$\dfrac{k^{\mu_j}}{k^\mu}$ is positive definite.
\end{Lem}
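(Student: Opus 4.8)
The plan is to exploit the complete Nevanlinna--Pick structure of $\DD(\mu)$ together with the multiplier invariant subspace machinery (Theorem A and Theorem B) to produce, for each fixed $j$, a one-point-mass space $\DD(a_j\delta_{\lambda_j})$ whose kernel quotient against $k^\mu$ is positive definite. Fix $j$ and consider the multiplier invariant subspace
\[
\MM_j := \bigl\{ f\in\DD(\mu) : f(\lambda_k)=0 \text{ for all } k\neq j \bigr\}
\]
(interpreted via the bounded evaluation functionals at the $\lambda_k\in\T$, which exist by the Richter--Sundberg theory recalled in Section 2). This is closed, and it is multiplier invariant: any multiplier of $\DD(\mu)$ extends continuously to each $\lambda_k$, so if $f$ vanishes at $\lambda_k$ then so does $\varphi f$. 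By Theorem B, $\dim(\MM_j\ominus z\MM_j)=1$; let $f_j$ be a unit vector spanning it. Properties (ii) and (iii) of Theorem B then identify $\MM_j$, as a space with equality of norms, with $f_j\,\DD(\mu_{f_j})$ where $d\mu_{f_j}=|f_j|^2\,d\mu$. Since $f_j$ vanishes at each $\lambda_k$, $k\neq j$, the measure $\mu_{f_j}$ has no mass at those points, so $\mu_{f_j}=|f_j(\lambda_j)|^2\alpha_j\,\delta_{\lambda_j}=:a_j\delta_{\lambda_j}$ (one checks $f_j(\lambda_j)\neq 0$, else $f_j\equiv 0$ by Theorem B(iii) forcing $\MM_j=\{0\}$, contradicting $\dim=1$; this also shows $a_j>0$).

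Next I would connect the reproducing kernels. From the identification $\MM_j = f_j\,\DD(a_j\delta_{\lambda_j})$ with equality of norms, the reproducing kernel of $\MM_j$ at $z$ is
\[
k^{\MM_j}_z(w) = f_j(w)\,\overline{f_j(z)}\,k^{a_j\delta_{\lambda_j}}_z(w),
\]
which is the standard formula for the kernel of $\varphi H$ when $M_\varphi$ is an isometry onto a subspace $\varphi H$ of $H$. On the other hand, $k^{\MM_j}_z = P_{\MM_j}k^\mu_z$ since $\MM_j\subset\DD(\mu)$. Therefore
\[
\frac{P_{\MM_j}k^\mu_z(w)}{k^\mu_z(w)} = \frac{f_j(w)\,\overline{f_j(z)}\,k^{a_j\delta_{\lambda_j}}_z(w)}{k^\mu_z(w)}.
\]
By the McCullough--Trent consequence \eqref{eqposdef2}, the left-hand side $P_{\MM_j}k^\mu_z(w)/k^\mu_z(w)$ is positive definite in $(z,w)$. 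Dividing a positive definite kernel by the positive definite rank-one kernel $f_j(w)\overline{f_j(z)}$ (equivalently, conjugating by $M_{1/f_j}$, legitimate wherever $f_j\neq 0$, and $f_j$ is a nonzero analytic function so this fails only on a discrete set which can be handled by continuity) preserves positive definiteness. Hence
\[
\frac{k^{a_j\delta_{\lambda_j}}_z(w)}{k^\mu_z(w)}
\]
is positive definite, which is the assertion with $\mu_j=a_j\delta_{\lambda_j}$.

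The main obstacle I anticipate is the bookkeeping around the factor $f_j$: specifically, justifying that dividing the positive definite kernel $P_{\MM_j}k^\mu_z(w)/k^\mu_z(w)$ by $f_j(w)\overline{f_j(z)}$ leaves something positive definite, since $f_j$ may have zeros in $\D$. The clean way is to note that $f_j(w)\overline{f_j(z)}$ is itself positive semidefinite, and on the open dense set where $f_j$ has no zero the quotient equals $k^{a_j\delta_{\lambda_j}}_z(w)/k^\mu_z(w)$, which is a continuous (indeed real-analytic) function of $(z,w)$; positive semidefiniteness on a dense set passes to the closure by continuity, and then one upgrades to strict positive definiteness using that $k^{a_j\delta_{\lambda_j}}$ is itself a genuine reproducing kernel. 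A secondary point requiring care is verifying that $\MM_j$ is genuinely multiplier invariant and closed, and that Theorem B applies to it — this rests on the boundedness of the evaluation functionals at boundary points $\lambda_k$, which is exactly the content recalled after \eqref{eq:definition-norme-dirichlet-local}. Everything else (the kernel formula for an isometrically embedded multiplier submodule, the identification of $\mu_{f_j}$) is routine given Theorems A and B.
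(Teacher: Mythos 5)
Your proof is correct and follows essentially the same route as the paper's: the same subspace $\MM_j$, the same extremal function ($\phi_j$ in the paper), the identification $\MM_j=\phi_j\,\DD(a_j\delta_{\lambda_j})$ with equality of norms via Theorem B, the kernel identity $P_{\MM_j}k^\mu_z(w)=\phi_j(w)\overline{\phi_j(z)}\,k^{\mu_j}_z(w)$, and the positive definiteness of $P_{\MM_j}k^\mu_z(w)/k^\mu_z(w)$ coming from Theorem A via \eqref{eqposdef2}. Your extra care about dividing by $f_j(w)\overline{f_j(z)}$ is sound (in fact $f_j$ is zero-free in $\D$, since the polynomial $\prod_{i\neq j}(z-\lambda_i)$ lies in $\MM_j=f_j\,\DD(\mu_{f_j})$), a point the paper's own proof passes over silently.
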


\begin{proof}
First, notice that the kernel $k^\mu$ is never zero (see \cite{Shi01}) and
consequently the quotient is well defined. Let $j\in \{1,\dots, n\}$ be fixed and define
\[
{\mathcal M}_j:=\{f\in \DD(\mu):f(\lambda_i)=0, \, \forall i\neq j\}.
\]
Then ${\mathcal M}_j$
is a multiplier invariant subspace of $\DD(\mu)$. Take $\phi_j\in {\mathcal M}_j\ominus z{\mathcal M}_j$,
$\|\phi_j\|_{\mu}=1$. Then by Theorem B we see that the multiplication operator
$M_{\phi_j}:\DD(\mu_{\phi_j}) \to {\mathcal M}_j$ is an onto isometry (and consequently a unitary operator).
Here,
\[
d\mu_{\phi_j}=|\phi_j|^2d\mu=\sum_{i=1}^n\alpha_i|\phi_j(\lambda_i)|^2
\delta_{\lambda_i}=\alpha_j|\phi_j(\lambda_j)|^2 \delta_{\lambda_j},
\]
the last equality following from the fact that $\phi_j\in\mathcal M_j$. Define
$a_j:=\alpha_j|\phi_j(\lambda_j)|^2$ and $d\mu_j=d\mu_{\phi_j}$.
Then the reproducing kernel for the space ${\mathcal M}_j$ is given by
\[
k^{{\mathcal M}_j}_z(w)=\overline{\phi_j(z)}\phi_j(w) k^{\mu_j}_z(w).
\]
On the other hand, we also know that $k^{{\mathcal M}_j}_z=P_{{\mathcal M}_j}k^\mu_z$, hence
\begin{eqnarray*}
\frac{k^{\mu_j}_z(w)}{k^\mu_z(w)}&=&\frac{1}{\overline{\phi_j(z)}\phi_j(w)} \frac{k^{{\mathcal M}_j}_z(w)}{k^\mu_z(w)}\\
&=&\frac{1}{\overline{\phi_j(z)}\phi_j(w)} \frac{P_{{\mathcal M}_j}k^\mu_z(w)}{k^\mu_z(w)},
\end{eqnarray*}
and since each one of the factors is positive definite, then the result follows.
\end{proof}

Now we come to the proof of reproducing kernel thesis for the identity map
$I:\DD(\mu)\to L^2(\nu)$, in the case where $\mu$ is a finite sum of point
masses.

\noindent
{\it Proof of Theorem~\ref{thm:rkt-finite}.}$\quad$ Using Theorem~\ref{Car-m} and Carleson's reproducing
kernel thesis for $H^2$, it is sufficient to prove that if there exists a constant $C>0$ such that
\[
\int_\D |k^\mu_w|^2\,d\nu\leq C \|k_w^\mu\|^2_\mu,
\]
then
\[
\int_{\D}\frac{1}{|1-\bar{w}z|^2}\prod_{i=1}^n|z-\lambda_i|^2 d\nu(z)\lesssim \frac{1}{1-|w|^2}.
\]
Of course it is enough to show that
\begin{equation}\label{eq1}
\frac{1-|w|^2}{|1-\bar{w}z|^2}\prod_{i=1}^n |z-\lambda_i|^2 \lesssim \frac{|k_w^\mu(z)|^2}{\|k_w^\mu\|^2_{\mu}}.
\end{equation}
According to Lemma~\ref{lemposdef}, for every $j=1,\dots,n$, there exists a positive constant $a_j$ such that if
$\mu_j=a_j\delta_{\lambda_j}$, then $\dfrac{k^{\mu_j}}{k^\mu}$ is a positive definite function. In particular, that
implies
 \begin{equation}\label{eqposdef}
\left|\frac{k^{\mu_j}_z(w)}{k^\mu_z(w)}\right|^2\leq \frac{\|k^{\mu_j}_z\|_{\mu_j}^2\|k^{\mu_j}_w\|_{\mu_j}^2}
{\|k^\mu_z\|_{\mu}^2\|k^\mu_w\|_{\mu}^2}.
\end{equation}
Now recall that by \eqref{eq:rkt-cle-one-point}, there exists a constant $C>0$ such that, for every $j$ we have
\[
\frac{1-|w|^2}{|1-\bar{w}z|^2}|z-\lambda_j|^2\leq C \frac{|k_w^{\mu_j}(z)|^2}{\|k_w^{\mu_j}\|_{\mu_j}^2}.
\]
Therefore to show \eqref{eq1}, it is enough to establish the following inequality for every $j$:
\begin{equation}\label{eqenough2}
\prod_{i\neq j}|z-\lambda_i|^2\lesssim \frac{\|k_z^\mu\|^2_{\mu}}{\|k_z^{\mu_j}\|_{\mu_j}^2}.
\end{equation}
An equivalent equation  is the following one (obtained by using the notation of the proof of
Lemma \ref{lemposdef}):

\begin{equation}\label{eqenough3}
\|P_{\mathcal{M}_j}k_z^{\mu}\|^2_{\mu}\prod_{i\neq j}|z-\lambda_i|^2\lesssim |\phi_j(z)|^2 \|k_z^{\mu}\|^2_{\mu}.
\end{equation}

On the other hand, notice that since the multiplication operator $M_{\phi_j}$ is unitary,
then the operator $M_{\phi_j^{-1}}:\mathcal{M}_j\to \DD(\mu_j)$ is also unitary and satisfies that
$M_{\phi_j}^\ast=M_{\phi_j^{-1}}$. Therefore, if $f\in \mathcal{M}_j$ then
$M_{\phi^{-1}_j}f\in \DD(\mu_j)\subset H^2$.

Now, note that since $p_j(z):=\prod_{i\neq j}(z-\lambda_i)$ is a polynomial, then
$p_j\in H^\infty \cap \DD(\mu_{\phi_j})$ which means that $p_j$ is a multiplier of the space $\DD(\mu_j)$
and consequently $p_j M_{\phi_j^{-1}} f \in \DD(\mu_j)$ which implies that
$p_jM_{\phi_j^{-1}}f\in \DD(\delta_{\lambda_j})$.

If $i\neq j$, then since $f=\phi_j \tilde{f}$ for some $\tilde{f}\in \DD(\mu_j) \subset H^2$ we have by
\eqref{eq:local-direct-integral}
\begin{eqnarray*}
\mathcal D_{\lambda_i}(p_j\phi_j^{-1}f)&=&\frac{1}{2\pi}\int_0^{2\pi}\left|\frac{1}{e^{it}-\lambda_i}
\left(\frac{p_j(e^{it})f(e^{it})}{\phi_j(e^{it})}-\frac{p_j(\lambda_i)f(\lambda_i)}{\phi_j(\lambda_i)}\right)\right|^2dt\\
&=&\frac{1}{2\pi}\int_0^{2\pi}\left|\prod_{\ell\neq i, j} (e^{it}-\lambda_\ell)\tilde{f}(e^{it})\right|^2dt\\
&\lesssim& \|\tilde{f}\|_{H^2}^2<+\infty,
\end{eqnarray*}
and consequently $p_j\phi_j^{-1}f\in \DD(\delta_{\lambda_i})$. Thus $p_j\phi_j^{-1}f\in
\bigcap_{i=1}^n \DD(\delta_{\lambda_i})=\DD(\mu)$ and moreover $p_j\phi_j^{-1}f\in \mathcal{M}_j$.

Therefore, the multiplication operator $M_{p_j\phi_j^{-1}}:\mathcal{M}_j\to\mathcal{M}_j$
is well defined and by the closed-graph theorem we conclude that it is bounded. Hence,

\begin{eqnarray*}
\frac{|p_j(z)|\|k_z^{\mathcal{M}_j}\|^2_{\mu}}{|\phi_j(z)|} &=&
|\langle M_{p_j\phi_j^{-1}}(k_z^{\mathcal{M}_j}), k_z^\mu\rangle_{\mu}|\\
&\leq& \|M_{p_j\phi_j^{-1}} k_z^{\mathcal{M}_j}\|_{\mu}\|k_z^\mu\|_{\mu}\\
&\leq& \|M_{p_j\phi_j^{-1}}\|\|k_z^{\mathcal{M}_j}\|_{\mu}\|k_z^\mu\|_{\mu}
\end{eqnarray*}
where $\|M_{p_j\phi_j^{-1}}\|$ denotes the norm of the operator $M_{p_j\phi_j^{-1}}$ acting on $\mathcal{M}_j$.
This gives \eqref{eqenough3} and concludes the proof. \hfill$\square$

\medskip

\begin{Rem}\label{Rem:weighted-dirichlet-spaces}
Let $0\leq \alpha\leq 1$.
The weighted Dirichlet space $\DD_\alpha$ consists of analytic functions $f$ on $\D$ such that
\[
\|f\|_\alpha^2 := |f(0)|^2 + \int_\D |f'(z)|^2 (1-|z|^2)^\alpha\, dA(z) \,<\, \infty.
\]
Note that $\DD_1=H^2$ and $\DD_0=\DD$.
The reproducing kernels in $\DD_\alpha$ are given by
\[
k_w^\alpha(z) = \begin{cases}
                (1-\overline{w}z)^{-\alpha},&\hbox{for }0<\alpha\leq 1;\\
                 \frac{1}{\overline{w} z}\log\left(\frac{1}{1-\overline{w} z}\right),&\hbox{for }\alpha=0.
                \end{cases}
\]
Suppose that a finite positive measure $\nu$ on $\D$ satisfies
\begin{equation}\label{Cthsis-w}
\int_\D |k_w^\alpha|^2 \, d\nu \leq c \|k_w^\alpha\|_\alpha^2, \hspace{1cm} (w \in \D).
\end{equation}
It is not difficult to see that \eqref{Cthsis-w} is equivalent to say that $\nu$ is an $\alpha$-Carleson measure,
i.e.,
\[
\nu(S(I))=\begin{cases}
O(|I|^\alpha),&\hbox{for } 0<\alpha\leq 1;\\
O((\log\frac{e}{|I|})^{-1}),&\hbox{for }\alpha=0,
          \end{cases}
\]
for every arc $I\subset \T$ (see for instance \cite{ABP-2009} or \cite{seip}).
On the other hand, as noted in \cite{marshall-sundberg} (see also \cite{seip}),
not all $\alpha$-Carleson measures are
Carleson measures for $\DD_\alpha$, for $\alpha \in [0,1)$.
In other words, the (RKT) does not hold in $\DD_\alpha$ spaces, $\alpha \in [0,1)$.
\end{Rem}

\section{Reproducing kernel thesis and compactness}
One can ask whether  the property of being a compact Carleson measure (the embedding $\DD(\mu)\hookrightarrow L^2(\nu)$ is compact) could be also characterized by a property on the family of normalized reproducing kernels. Note that we should normalize the reproducing kernels to obtain a weakly null sequence. However, this is not sufficient as shown in the following result.
\begin{Lem}\label{lem:weakly-convergence}
Let $\mu=\sum_{j=1}^n \alpha_j\delta_{\lambda_j}$, $\lambda_j\in\T$, $\alpha_j>0$, let $\hat k_w^\mu=k_w^{\mu}/\|k_w^{\mu}\|_{\mu}$ be the normalized reproducing kernel for $\DD(\mu)$ at $w$, and let $\zeta\in\T$. Then $\hat k_w^\mu$ converges weakly to zero as $w\to\zeta$ if and only if $\zeta\neq\lambda_j$ for $1\leq j\leq n$.
\end{Lem}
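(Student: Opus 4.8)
The plan is to reduce both implications to the behaviour of $\|k_w^\mu\|_\mu$ as $w\to\zeta$. Since $\|\hat k_w^\mu\|_\mu=1$ for every $w$ and polynomials are dense in $\DD(\mu)$, the net $(\hat k_w^\mu)_w$ converges weakly to $0$ as $w\to\zeta$ if and only if $\langle p,\hat k_w^\mu\rangle_\mu=p(w)/\|k_w^\mu\|_\mu\to0$ for every polynomial $p$; and since $p(w)\to p(\zeta)$ is finite while testing against the constant function $1$ already gives $\langle 1,\hat k_w^\mu\rangle_\mu=1/\|k_w^\mu\|_\mu$, this occurs precisely when $\|k_w^\mu\|_\mu\to\infty$ as $w\to\zeta$. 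Recall also that $\|k_w^\mu\|_\mu^2=k_w^\mu(w)=\sup\{\,|f(w)|^2:f\in\DD(\mu),\ \|f\|_\mu\le1\,\}$ is the squared norm of the evaluation functional at $w$, which is what all the estimates below will bound.

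First I would handle the case $\zeta\notin\{\lambda_1,\dots,\lambda_n\}$ and force $\|k_w^\mu\|_\mu$ to blow up by means of a test function. Take $f_w(z)=(1-\overline{w}z)^{-1}$; it lies in $\DD(\mu)$ since $f_w'$ is bounded on $\overline{\D}$ and $\int_\D P_\mu\,dA=\mu(\T)<\infty$. From the identity $\bigl(f_w(z)-f_w(\lambda_j)\bigr)/(z-\lambda_j)=\overline{w}\bigl[(1-\overline{w}z)(1-\overline{w}\lambda_j)\bigr]^{-1}$, together with \eqref{lint} and \eqref{eq:local-direct-integral}, a short computation gives $\|f_w\|_\mu^2=(1-|w|^2)^{-1}\bigl(1+|w|^2\sum_{j=1}^n\alpha_j|1-\overline{w}\lambda_j|^{-2}\bigr)$, and hence
\[
\|k_w^\mu\|_\mu^2\ \ge\ \frac{|f_w(w)|^2}{\|f_w\|_\mu^2}\ =\ \frac{1}{(1-|w|^2)\bigl(1+|w|^2\sum_{j=1}^n\alpha_j|1-\overline{w}\lambda_j|^{-2}\bigr)}.
\]
As $w\to\zeta$ with $\zeta\neq\lambda_j$ for all $j$, one has $1-|w|^2\to0$ while $|1-\overline{w}\lambda_j|\to|\lambda_j-\zeta|>0$, so the right-hand side tends to $+\infty$; therefore $\|k_w^\mu\|_\mu\to\infty$ and, by the first paragraph, $\hat k_w^\mu\to0$ weakly.

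For the converse, suppose $\zeta=\lambda_{j_0}$. It is enough to produce a single approach $w\to\lambda_{j_0}$ along which $\hat k_w^\mu$ fails to converge weakly to $0$, and the radial one $w=r\lambda_{j_0}$, $r\to1^-$, works because $\sup_{0\le r<1}\|k_{r\lambda_{j_0}}^\mu\|_\mu<\infty$. Indeed $\alpha_{j_0}\DD_{\lambda_{j_0}}(f)\le\DD_\mu(f)\le\|f\|_\mu^2$, and the evaluation $f\mapsto f(\lambda_{j_0})$ is bounded on $\DD(\mu)$ (as noted earlier in the paper), so by the Richter--Sundberg representation $f=f(\lambda_{j_0})+(z-\lambda_{j_0})g$ with $\|g\|_2^2=\DD_{\lambda_{j_0}}(f)\le\alpha_{j_0}^{-1}\|f\|_\mu^2$. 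Using $|g(r\lambda_{j_0})|\le\|g\|_2(1-r^2)^{-1/2}$ and $(1-r)(1-r^2)^{-1/2}\le1$ one gets $|f(r\lambda_{j_0})|\le\bigl(C_0+\alpha_{j_0}^{-1/2}\bigr)\|f\|_\mu$ uniformly in $r$, where $C_0$ is the norm of the evaluation at $\lambda_{j_0}$; taking the supremum over $\|f\|_\mu\le1$ yields the uniform bound. Hence $\langle 1,\hat k_{r\lambda_{j_0}}^\mu\rangle_\mu=1/\|k_{r\lambda_{j_0}}^\mu\|_\mu$ stays bounded away from $0$, so $\hat k_w^\mu$ does \emph{not} converge weakly to $0$ as $w\to\lambda_{j_0}$, and the two cases together give the claimed equivalence.

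The genuinely analytic content lies in the two quantitative bounds on $\|k_w^\mu\|_\mu$; I expect the explicit evaluation of $\|f_w\|_\mu^2$ in the first case to be the most delicate bookkeeping, while the second case is softer, its one essential input being the boundedness on $\DD(\mu)$ of point evaluation at an atom $\lambda_{j_0}$ --- the feature that distinguishes $\DD(\mu)$ from $H^2$. The remaining steps are routine Hilbert-space manipulations.
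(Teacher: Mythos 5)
Your proof is correct, but it follows a genuinely different route from the paper's. You reduce weak nullity of $\hat k_w^\mu$ to the single scalar condition $\|k_w^\mu\|_\mu\to\infty$ (testing against the constant $1$ for necessity, against polynomials plus density for sufficiency), then verify or refute this blow-up directly: off the atoms via the explicit test functions $f_w(z)=(1-\overline{w}z)^{-1}$, whose $\DD(\mu)$-norm you compute exactly from \eqref{lint} and \eqref{eq:local-direct-integral} (your identity $(f_w(z)-f_w(\lambda_j))/(z-\lambda_j)=\overline{w}[(1-\overline{w}z)(1-\overline{w}\lambda_j)]^{-1}$ and the resulting formula for $\|f_w\|_\mu^2$ check out), and at an atom $\lambda_{j_0}$ via the Richter--Sundberg decomposition $f=f(\lambda_{j_0})+(z-\lambda_{j_0})g$, which gives $\sup_r\|k_{r\lambda_{j_0}}^\mu\|_\mu<\infty$ and hence failure of weak convergence along the radius --- enough to negate unrestricted convergence. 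The paper instead reduces weak convergence to uniform convergence on compact sets, settles the one-atom case through Sarason's identification $\DD(\alpha\delta_\lambda)=\HH(b_\lambda)$ and the angular-derivative criterion for $b_\lambda$ at $\zeta$, and then transfers to finitely many atoms using the complete Nevanlinna--Pick machinery, namely the kernel comparison estimates \eqref{eqposdef} and \eqref{eqenough2}. What your argument buys is self-containedness and elementarity: it needs only the material of Section 2 plus Theorem \ref{Thm-rep}-type facts, and it even yields the clean quantitative dichotomy ($\|k_w^\mu\|_\mu\asymp$ bounded at atoms, $\gtrsim (1-|w|^2)^{-1/2}$ elsewhere). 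What the paper's route buys is that the estimates \eqref{eqposdef}, \eqref{eqenough2} and \eqref{inf-1} are exactly the ones reused immediately afterwards in the compactness theorem, so the lemma comes essentially for free once that machinery is in place; your approach would leave those estimates still to be proved for the subsequent theorem.
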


\begin{proof}
We easily see that the normalized reproducing kernel $\hat k_w^\mu$ converges weakly to zero as  $w\to\zeta$ if and only if $\hat k_w^\mu$ converges uniformly to zero on the compact subsets of $\D$ as $w\to\zeta$.

First consider the case of one point mass, $\mu=\alpha\delta_\lambda$, $\lambda\in\T$ and $\alpha>0$. Note that
\[
\frac{k_w^{\mu}(z)}{\|k_w^{\mu}\|_{\mu}} = \frac{1-\overline{b_\lambda(w)} b_\lambda(z)}{1-\overline{w}z}
\, \cdot \, \Bigl(\frac{1-|w|^2}{1-|b_\lambda(w)|^2}\Bigr)^{1/2},
\]
where $b_\lambda$ is the function in the unit ball of $H^\infty$ given by equation \eqref{kernel}. Moreover, if $K$ is a compact subset of $\D$ and $z \in K$, we have
\[
\Bigl|\frac{1-\overline{b_\lambda(w)} b_\lambda(z)}{1-\overline{w}z}\Bigr| \asymp 1, \hspace{1cm} (w \in \D).
\]
Then $\hat k_w^\mu$ converges weakly to zero as  $w\to\zeta$ if and only if
\[
\lim_{w \to \zeta} \frac{1-|w|^2}{1-|b_\lambda(w)|^2} = 0,
\]
or equivalently, $b_\lambda$ has no angular derivative at  $\zeta$ (recall that $b_\lambda$ is said to have an angular derivative at point $\zeta$ if $b_\lambda$ and $b'_\lambda$ have angular limits at point $\zeta$ and $|b_\lambda(\zeta)|=1$).  This is true if and only if $\zeta\in\T\setminus\{\lambda\}$.

For the general case,  fix $j\in\{1,\dots,n\}$ and note that  \eqref{eqposdef} can be written as
\[
|\hat k_w^{\mu_j}(z)|\leq |\hat k_w^\mu(z)|\frac{\|k_z^{\mu_j}\|_{\mu_j}}{\|k_z^\mu\|_{\mu}}\qquad (z,w\in\D).
\]
Now take $K$ a compact subset of $\D$. According to \eqref{eqenough2}, there exists a constant $C(K)>0$ such that for any $z\in K$, we have
\[
\frac{\|k_z^{\mu_j}\|_{\mu_j}}{\|k_z^\mu\|_{\mu}}\lesssim \frac{1}{\prod_{i\neq j}|z-\lambda_i|}\lesssim C(K).
\]
Hence
\[
|\hat k_w^{\mu_j}(z)|\lesssim C(K) |\hat k_w^\mu(z)|\qquad (z\in K).
\]
Now if $\hat k_w^\mu$ converges uniformly to zero on $K$ as $w\to\zeta$, the last inequality implies that $\hat k_w^{\mu_j}$ also converges uniformly to zero on $K$ as $w\to\zeta$. The first part of the proof now gives that $\zeta\neq \lambda_j$.

Conversely let $\zeta\neq\lambda_j$, $j=1,\dots,n$, and let $f\in\DD(\mu)$. We have
\[
\langle f,\hat k_w^\mu \rangle_\mu=\frac{1}{\|k_w^\mu\|_{\mu}}f(w)=\frac{\|k_w^{\mu_j}\|_{\mu_j}}{\|k_w^\mu\|_\mu}\langle f,\hat k_w^{\mu_j}\rangle_{\mu_j}.
\]
Using once more \eqref{eqenough2}, we get
\[
|\langle f,\hat k_w^\mu \rangle_\mu|\lesssim \frac{1}{\prod_{i\neq j}|w-\lambda_i|}|\langle f,\hat k_w^{\mu_j}\rangle_{\mu_j}|.
\]
But since $\zeta\neq\lambda_j$ for any $j=1,\dots,n$, the first part of the proof shows that $\langle f,\hat k_w^{\mu_j}\rangle_{\mu_j}\to 0$ and $\prod_{i\neq j}|w-\lambda_i|$ remains bounded below, as $w\to\zeta$. Thus $\langle f,\hat k_w^\mu \rangle_\mu\to 0$ as $w\to\zeta$.

\end{proof}

The following result shows now that the reproducing kernel thesis is true for the compactness of the embedding $\DD(\mu)\hookrightarrow L^2(\nu)$ when $\mu$ is a finite sum of point masses.

\begin{Thm}
Let $\nu$ be a finite positive Borel measure on $\D$, and let $\mu=\sum_{j=1}^n\alpha_j \delta_{\lambda_j}$, where $\lambda_j\in\T$ and $\alpha_j>0$. Then $\nu$ is a compact Carleson measure for $\DD(\mu)$ if and only if for any $\zeta\in\T\setminus\{\lambda_1,\dots,\lambda_n\}$, we have
\begin{equation}\label{C-cthesis-compact}
\int_{\D} |k_w^{\mu}|^2 \, d\nu \,=\, o( \|k_w^{\mu}\|_{\mu}^2), \hspace{1cm} (w \to \zeta).
\end{equation}
\end{Thm}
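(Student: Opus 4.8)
The argument splits into the two implications.

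\emph{Forward implication.} Suppose $I\colon\DD(\mu)\to L^2(\nu)$ is compact and fix $\zeta\in\T\setminus\{\lambda_1,\dots,\lambda_n\}$. It suffices to verify \eqref{C-cthesis-compact} along an arbitrary sequence $w_k\to\zeta$. By Lemma~\ref{lem:weakly-convergence} the normalized kernels $\hat k^\mu_{w_k}=k^\mu_{w_k}/\|k^\mu_{w_k}\|_\mu$ form a bounded sequence in $\DD(\mu)$ that tends to $0$ weakly, so compactness of $I$ (equivalently, complete continuity) forces $\|\hat k^\mu_{w_k}\|_{L^2(\nu)}\to 0$, which is exactly $\int_\D|k^\mu_{w_k}|^2\,d\nu=o(\|k^\mu_{w_k}\|_\mu^2)$.

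\emph{Converse, reduction to $H^2$.} Assume \eqref{C-cthesis-compact} holds for every $\zeta\in\T\setminus\{\lambda_1,\dots,\lambda_n\}$. By Theorem~\ref{c-Car-m} it is enough to show that $d\lambda:=\prod_{i=1}^n|z-\lambda_i|^2\,d\nu(z)$ is a compact Carleson measure for $H^2$, and by the reproducing kernel thesis for compact Carleson measures of $H^2$ (the ``$o$'' version of Carleson's theorem, recalled in the introduction) this amounts to
\[
\Phi(w):=\int_\D\frac{1-|w|^2}{|1-\overline w z|^2}\,\prod_{i=1}^n|z-\lambda_i|^2\,d\nu(z)\longrightarrow 0,\qquad |w|\to 1^-,
\]
the limit being uniform, i.e. $\Phi(w_k)\to 0$ for every sequence with $|w_k|\to 1$. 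Pass to a subsequence so that $w_k\to\zeta\in\T$. If $\zeta\notin\{\lambda_1,\dots,\lambda_n\}$, then inequality \eqref{eq1} from the proof of Theorem~\ref{thm:rkt-finite} gives, pointwise, $\tfrac{1-|w_k|^2}{|1-\overline{w_k}z|^2}\prod_i|z-\lambda_i|^2\lesssim|\hat k^\mu_{w_k}(z)|^2$, so $\Phi(w_k)\lesssim\int_\D|\hat k^\mu_{w_k}|^2\,d\nu\to 0$ by \eqref{C-cthesis-compact} at $\zeta$.

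\emph{The remaining case $w_k\to\lambda_j$: the main obstacle.} Here \eqref{C-cthesis-compact} gives no information and, by Lemma~\ref{lem:weakly-convergence}, $\hat k^\mu_w$ is not even weakly null as $w\to\lambda_j$, so \eqref{eq1} is far too lossy. The idea I would pursue is to exploit that $\prod_i|z-\lambda_i|^2$ vanishes to second order precisely at $\lambda_j$, where the Poisson kernel $P_{w_k}$ concentrates: since $\prod_{i\ne j}|z-\lambda_i|^2\asymp 1$ near $\lambda_j$ one reduces to showing $\int_\D P_{w_k}(z)\,|z-\lambda_j|^2\,d\nu(z)\to 0$, and one uses the factorization $\sqrt{1-|w|^2}\,\dfrac{1-\overline{\lambda_j}z}{1-\overline w z}=(1-\overline{\lambda_j}z)\,\hat k^H_w(z)$, which exhibits the integrand as $\bigl|(1-\overline{\lambda_j}z)\hat k^H_w(z)\bigr|^2$, where $(1-\overline{\lambda_j}z)\hat k^H_w=(z-\lambda_j)\cdot(-\overline{\lambda_j}\,\hat k^H_w)$ is a bounded, weakly null family in $\DD(\delta_{\lambda_j})=\HH(b_{\lambda_j})$ localized at $\lambda_j$ as $|w|\to 1$. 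One then splits $\D$ into $\{|z-\lambda_j|>\delta\}$, where $P_{w_k}\to 0$ uniformly and finiteness of $\nu$ closes the estimate for each fixed $\delta$, and $\{|z-\lambda_j|\le\delta\}$; on the latter, combining $|z-\lambda_j|^2\le 2|z-w_k|^2+2|w_k-\lambda_j|^2$ with the elementary bound $P_w(z)|z-w|^2\le 1-|w|^2$ reduces everything to controlling $|w_k-\lambda_j|^2\int_{|z-\lambda_j|\le\delta}P_{w_k}(z)\,d\nu(z)$, which a dyadic decomposition in $|z-\lambda_j|$ handles, using only the finiteness of $\nu$, on every shell except the one of radius comparable to $|w_k-\lambda_j|$. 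Getting the required decay on that last shell — equivalently, uniformity when $w_k$ approaches $\lambda_j$ tangentially — is the genuinely delicate point; I would attack it by localizing near $\lambda_j$ through the subspace $\MM_j$ and the identities from the proof of Lemma~\ref{lemposdef}, transferring the estimate to the one-point-mass space $\DD(a_j\delta_{\lambda_j})$, where the explicit de Branges--Rovnyak kernel used in Theorem~\ref{thesis-thm} is available.
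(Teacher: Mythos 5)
Your necessity argument and your treatment of the converse along sequences $w_k\to\zeta$ with $\zeta\notin\{\lambda_1,\dots,\lambda_n\}$ are exactly the paper's (Lemma~\ref{lem:weakly-convergence} for the forward direction, the pointwise bound \eqref{eq1} for the other), but the proposal has a genuine gap, and it sits precisely where you say it does: the case $w\to\lambda_j$ is never proved, only a plan of attack is sketched, and that plan cannot work as set up. Having chosen to verify the kernel form of the vanishing Carleson condition for $d\sigma=\prod_i|z-\lambda_i|^2\,d\nu$, you must show $\Phi(w)\to 0$ also when $w$ approaches $\lambda_j$, tangential approach included. But the hypothesis \eqref{C-cthesis-compact} is silent at $\zeta=\lambda_j$, and by Lemma~\ref{lem:weakly-convergence} the normalized kernels of $\DD(a_j\delta_{\lambda_j})$ do not even tend weakly to zero as $w\to\lambda_j$, so ``transferring the estimate to the one-point-mass space'' via $\MM_j$ and Lemma~\ref{lemposdef} produces no smallness there whatsoever. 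The decay you need on the critical shell $|z-\lambda_j|\asymp|w-\lambda_j|$ is nothing less than the vanishing-Carleson behaviour of $\sigma$ on boxes centered near $\lambda_j$ at that scale --- i.e.\ it is a piece of the very conclusion you are trying to establish --- and it cannot be extracted from finiteness of $\nu$ together with soft kernel positivity or boundedness of multiplication operators. So the last step of your outline is not a delicate technicality left to the reader; it is the crux, and your route gives no mechanism to supply it.

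The paper closes this case by a different, and much cheaper, device: instead of proving the uniform kernel-form condition $\Phi(w)\to0$, it verifies Carleson's geometric criterion \eqref{eq:carleson-condition-compact} for $\sigma$. For centers $\zeta\neq\lambda_i$ it uses the same kernel estimate you use (taking $w=(1-h)\zeta$), while for boxes centered at the atoms it needs no kernel information at all: since the weight $\prod_j|z-\lambda_j|^2$ vanishes to second order at $\lambda_i$, one has the trivial bound $\sigma(S(\lambda_i,h))\le 4^{n-1}h^2\|\nu\|=o(h)$, using only $\nu(\D)<\infty$. That switch from the reproducing-kernel formulation to the box formulation at the points $\lambda_i$ is the idea missing from your proposal; with it, the troublesome tangential analysis near $\lambda_j$ that your sketch runs into never has to be performed.
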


\begin{proof}
According to Lemma~\ref{lem:weakly-convergence}, condition~\eqref{C-cthesis-compact} is necessary for $\nu$ being a compact Carleson measure for $\DD(\mu)$. Conversely assume that \eqref{C-cthesis-compact} is satisfied for any $\zeta\neq \lambda_j$, $j=1,\dots,n$. According to Theorem \ref{c-Car-m}, we shall show that $d\sigma(z):=\prod_{i=1}^n|z-\lambda_i|^2 d\nu(z)$ is a compact Carleson measure for $H^2$, which is equivalent (by Carleson's result) to
\begin{equation}\label{eq:carleson-condition-compact}
\sigma(S(\zeta,h))=o(h),\qquad h\to 0
\end{equation}
for any $\zeta\in\T$. First let us consider $\zeta\in\T\setminus\{\lambda_1,\dots,\lambda_n\}$ and fix $j\in\{1,\dots,n\}$. Using \eqref{eqposdef} and \eqref{eqenough2}, we have
\begin{eqnarray*}
\frac{1}{\|k_w^\mu\|_\mu^2} \int_{\D} |k_w^\mu(z)|^2d\nu(z)&\geq &\int_\D \frac{|k_w^{\mu_j}(z)|^2\|k_z^\mu\|_\mu^2}{\|k_w^{\mu_j}\|_{\mu_j}^2\|k_z^{\mu_j}\|_{\mu_j} ^2}d\nu(z)\\
&\gtrsim&\frac{1}{\|k_w^{\mu_j}\|_{\mu_j}^2} \int_\D |k_w^{\mu_j}(z)|^2\prod_{i\neq j}|z-\lambda_j|^2d\nu(z).
\end{eqnarray*}
But we know that
\[
\frac{|k_w^{\mu_j}(z)|^2}{\|k_w^{\mu_j}\|_{\mu_j}^2}=\frac{1-|w|^2}{1-|b_{\lambda_j}(w)|^2}\frac{|1-\overline{b_{\lambda_j}(w)}b_{\lambda_j}(z)|^2}{|1-\bar wz|^2},
\]
and according to \eqref{inf-1}, there exists a constant $\delta>0$ such that for any $z,w\in\D$, we have
\[
\frac{|1-\overline{b_{\lambda_j}(w)}b_{\lambda_j}(z)|^2}{1-|b_{\lambda_j}(w)|^2}\geq \delta |z-\lambda_j|^2.
\]
Thus we get
\[
\frac{1}{\|k_w^\mu\|_\mu^2} \int_{\D} |k_w^\mu(z)|^2d\nu(z)\gtrsim (1-|w|^2)\int_\D \frac{1}{|1-\bar w z|^2}d\sigma(z)\]
and condition~\eqref{C-cthesis-compact} gives
\[
\lim_{w\to\zeta}\left((1-|w|^2)\int_\D \frac{1}{|1-\bar w z|^2}d\sigma(z)\right)=0
\]
for any $\zeta\in\T\setminus\{\lambda_1,\dots,\lambda_n\}$. Now a standard argument shows that \eqref{eq:carleson-condition-compact} is satisfied for $\zeta\neq\lambda_i$, $i=1,\dots,n$. Indeed take $w=(1-h)\zeta$. Then, on one hand, for $z\in S(\zeta,h)$, we have
\[
|1-\bar w z|=|1-(1-h)\bar\zeta z|=|\zeta-(1-h)z|\leq |\zeta-z|+h\leq 2h.
\]
And on the other hand,  $1-|w|^2\geq 1-|w|=h$. Then
\[
\frac{1}{4h}\sigma(S(\zeta,h))\leq (1-|w|^2)\int_\D \frac{1}{|1-\overline{w}z|^2}\,d\sigma(z)\to 0\qquad \hbox{as }h\to 0,
\]
which gives \eqref{eq:carleson-condition-compact}. It remains to notice that condition~\eqref{eq:carleson-condition-compact} for $\zeta=\lambda_i$, $i=1,\dots,n$, follows from the following trivial estimate
\[
\sigma(S(\lambda_i,h))=\int_{S(\lambda_i,h)}\prod_{j=1}^n |z-\lambda_j|^2\,d\nu(z)\leq 4^{n-1}h^2\|\nu\|.
\]
\end{proof}

\end{document}